\documentclass[12pt,a4paper]{amsart}
\usepackage{graphicx}
\usepackage{times}
\usepackage{latexsym}
\usepackage{amssymb}
\usepackage{hyperref}
\usepackage{xcolor}
\usepackage{tikz}
\usepackage[all]{xy}

\headheight0cm
    \headsep22pt
    \textheight23cm
    \topmargin-.5cm
    \oddsidemargin 0cm
    \evensidemargin0cm
    \textwidth16.2cm
    \parindent=1.5\baselineskip
\parskip=15\baselineskip
\setlength{\parskip}{1ex plus 0.5ex minus 0.2ex}

\setlength{\marginparwidth}{2cm}

\newtheorem{prop}{Proposition}[section]

\newtheorem{thm}[prop]{Theorem}
\newtheorem{lemma}[prop]{Lemma}
\newtheorem{dfn}[prop]{Definition}

\theoremstyle{definition}

\newcommand{\C}[1]{{\mathcal #1}}
\newcommand{\B}[1]{{\mathbb #1}}

\newcommand{\bs}[1]{\boldsymbol{#1}}

\newcommand{\id}{\operatorname{\rm id}}

\newcommand{\Up}{{\tt Up}}

\newcommand{\op}{{\text{op}}}

\newcommand{\ord}[1]{\underline{#1}}

\renewcommand{\leq}{\leqslant}
\renewcommand{\geq}{\geqslant}

\newcommand{\cpN}{\mathbb{P}^N\!(\mathbb{C})}
\newcommand{\qcpN}{{\B{P}^N(\mathcal{T})}}
\newcommand{\Tplz}{{\mathcal{T}}}
\newcommand{\Cpct}{{\mathcal{K}}}
\newcommand{\CS}{{C(S^1)}}
\newcommand{\Tpn}[1]{{\Tplz^{\otimes {#1}}}}

\newcommand{\Jirred}{{\mathcal{J}}}
\newcommand{\Mirred}{{\mathcal{M}}}

\newcommand{\mN}{{\mathbb{N}}}

\newcommand{\sw}[1]{{{}^{(#1)}}}
\newcommand{\swn}[2]{{{}_{{#1}}^{(#2)}}}
\newcommand{\nc}[2]{\newcommand{#1}{#2}}
\nc{\bmlp}{\mbox{\boldmath$\left(\right.$}}
\nc{\bmrp}{\mbox{\boldmath$\left.\right)$}}
\nc{\LAblp}{\mbox{\LARGE\boldmath$($}}
\nc{\LAbrp}{\mbox{\LARGE\boldmath$)$}}
\nc{\Lblp}{\mbox{\Large\boldmath$($}}
\nc{\Lbrp}{\mbox{\Large\boldmath$)$}}
\nc{\lblp}{\mbox{\large\boldmath$($}}
\nc{\lbrp}{\mbox{\large\boldmath$)$}}
\nc{\blp}{\mbox{\boldmath$($}}
\nc{\brp}{\mbox{\boldmath$)$}}
\nc{\LAlp}{\mbox{\LARGE $($}}
\nc{\LArp}{\mbox{\LARGE $)$}}
\nc{\Llp}{\mbox{\Large $($}}
\nc{\Lrp}{\mbox{\Large $)$}}
\nc{\llp}{\mbox{\large $($}}
\nc{\lrp}{\mbox{\large $)$}}
\nc{\lbc}{\mbox{\Large\boldmath$,$}}
\nc{\lc}{\mbox{\Large$,$}}
\nc{\Lall}{\mbox{\Large$\forall\;$}}
\nc{\Lexists}{\mbox{\Large$\exists\;$}}
\nc{\bc}{\mbox{\boldmath$,$}}

\nc{\LAbls}{\mbox{\LARGE\boldmath$[$}}
\nc{\LAbrs}{\mbox{\LARGE\boldmath$]$}}
\nc{\Lbls}{\mbox{\Large\boldmath$[$}}
\nc{\Lbrs}{\mbox{\Large\boldmath$]$}}
\nc{\lbls}{\mbox{\large\boldmath$[$}}
\nc{\lbrs}{\mbox{\large\boldmath$]$}}
\nc{\bls}{\mbox{\boldmath$[$}}
\nc{\brs}{\mbox{\boldmath$]$}}
\nc{\LAls}{\mbox{\LARGE $[$}}
\nc{\LArs}{\mbox{\LARGE $]$}}
\nc{\Lls}{\mbox{\Large $[$}}
\nc{\Lrs}{\mbox{\Large $]$}}
\nc{\lls}{\mbox{\large $[$}}
\nc{\lrs}{\mbox{\large $]$}}
\newcommand{\upset}[1]{{\uparrow\!\! #1}}

\newcommand{\oN}{{\ord{N}}}

\title[Quantum projective space  from Toeplitz cubes]
{{\Large Quantum projective space} \\ \vspace{2.5mm} {\Large from Toeplitz cubes}}

\author{Piotr~M.~Hajac}
\address{Instytut Matematyczny,
Polska Akademia Nauk,
ul.~\'Sniadeckich 8, Warszawa, 00-956 Poland\\
Katedra Metod Matematycznych Fizyki,
Uniwersytet Warszawski,
ul. Ho\.za 74, Warszawa, 00-682 Poland}
\email{http://www.impan.pl/\~{}pmh}

\author{Atabey~Kaygun} \address{Department of Mathematics and Computer
  Science, Bah\c{c}e\c{s}ehir University, \c{C}\i ra\u{g}an Cad.,
  Be\c{s}ikta\c{s} 34353 Istanbul, Turkey}
\email{atabey.kaygun@bahcesehir.edu.tr}

\author{Bartosz~Zieli\'nski}
\address{Instytut Matematyczny, Polska Akademia Nauk,
ul.~\'Sniadeckich 8, Warszawa, 00-956 Poland\\
Department of Theoretical Physics II, University of \L{}\'od\'z,
Pomorska 149/153 90-236 \L{}\'od\'z, Poland}
\email{bzielinski@uni.lodz.pl}

\begin{document}
\baselineskip=15.9pt

\begin{abstract}
From $N$-tensor powers of the Toeplitz algebra, we
  construct  a multipullback C*-algebra that is a noncommutative
   deformation of  the complex projective space $\cpN$. Using
   Birkhoff's Representation Theorem, we prove that the lattice of
   kernels of the canonical projections on components of the multipullback C*-algebra is free. This shows that our deformation
   preserves the freeness of  the lattice of subsets
   generated by the affine covering of the complex projective space.
\end{abstract}

\vspace*{-15mm}\maketitle
{\it\large\hfil Dedicated to Henri Moscovici on the occasion of his
65th birthday.\hfil}
\smallskip

\section*{Introduction}

\subsection{Motivation}

The procedure of decomposing complicated spaces into the union of
simple subsets and
applying Mayer-Vietoris type arguments to understand thus decomposed spaces
from the gluing data of simple pieces is commonly used in mathematics.
Manifolds without boundary  fit particularly well this piecewise
approach because they are defined as spaces that are locally
diffeomorphic to $\mathbb{R}^n$.
Thus a manifold appears assembled
from standard pieces by the gluing data. The standard pieces
are contractible --- they are homeomorphic to a ball. They encode
only the dimension of a manifold. All the
rest, topological properties of the manifold included,
are described by the gluing data.

Recall that to study topological spaces, one typically uses open coverings.
They are, however, hard to describe in purely C*-algebraic terms. On the other
hand, the Gelfand transform  turns closed coverings of a compact
Hausdorff space $X$ into an appropriate
 set of surjections from the C*-algebra $C(X)$ of
continuous functions on $X$ onto other C*-algebras.
 Hence it is easier and more natural to consider closed coverings
if one wants a noncommutative generalisation in terms of C*-algebras.
 More specifically, one can
define a
covering of a quantum space to be a family of C*-algebra surjections whose
 kernels intersect to zero (cover the whole space).
 We refer to \cite{HKZ} and references therein for a more in-depth
 discussion of this issue.

The aim of this article
is to explore the method of constructing noncommutative
deformations of manifolds by deforming the standard pieces.
This method is an alternative to the global deformation methods.
Thus it is expected to yield new examples or provide a new perspective
on already known cases. Our deformation of the complex projective spaces
is related to but different from a much studied quantum-group example.
(See the last section for details.) By construction, it is particularly suited
for developing and testing a definition of the fiber-product of spectral
triples that should  describe
 a gluing of smooth (noncommutative) geometries along
their boundaries.

Finally, let us note that complex projective spaces are
 topologically interesting
manifolds equipped with non-trivial tautological line bundles.
It seems very plausible that our Toeplitz projective spaces enjoy the same
 type of topological non-triviality and lead to interesting K-theoretic
 computations. They should also lead to non-crossed product
 $U(1)$-C*-algebras (non-trivial $U(1)$ quantum principal bundles).
 Using index theory, this has already been achieved
 for $N=1$, i.e., for the mirror quantum
 sphere~\cite{HajacMatthesSzymanski:Mirror}.

\subsection{Main result}

Our  main result concerns a new noncommutative deformation of the
complex projective space  and the  lattice generated by its
affine covering.
The guiding principle of our deformation is to
preserve the gluing data of this manifold while deforming the standard
pieces. We refine the affine covering of a complex projective space
to the Cartesian powers of unit discs, and replace the algebra of
continuous functions on the disk by the Toeplitz algebra commonly
regarded as the algebra of a quantum
disc~\cite{KlimekLesniewski:QuantumDisk}. The main point here is that
we preserve the freeness property enjoyed by the lattice generated
by the affine covering of the complex projective space:

\noindent{\bf Theorem~\ref{mainfree}.}
{\em
Let $C(\qcpN)\subset\prod_{i=0}^N\Tplz^{\otimes N}$ be the
C*-algebra of the Toeplitz quantum projective space, and let
  $\pi_i\colon C(\qcpN)\rightarrow\Tplz^{\otimes N}$,
 $i\in\oN$,
be the family of restrictions of the canonical projections onto
the components.  Then the
family of ideals $\{\ker\pi_i\}_{i\in\{0,\ldots,N\}}$ generates a free
distributive lattice.
}

\subsection{Notation and conventions}

In this article, the tensor product means the C*-completed  tensor
product.  Accordingly, we use the
Heynemann-Sweedler notation for the
completed tensor product. Since all C*-algebras that we tensor
are nuclear, this
 completion is unique. Therefore, it is also maximal, which guarantees
 the flatness of the completed tensor product. We use this property
in our arguments. Since the subsets $\{0,\ldots,N\}\subset\mN$, $N\in\mN$,
occur in abundance throughout this paper, for the sake brevity we use
the notation
\begin{equation}
\oN:=\{0,\ldots,N\}.
\end{equation}


\section{Preliminaries}

\subsection{Free lattices and Birkhoff's Representation Theorem}
\label{ProjectiveSpacesOverZ2}

We first recall definitions and simple facts about ordered sets and
lattices to fix terminology and notation.
Our main references on the subject are
\cite{Birkhoff:Lattices, BS:UniversalAlgebra,
  Stanley:EnumerativeCombinatoricsVol1}.

A set $P$ together with a binary relation $\leq$ is called {\em a
  partially ordered set}, or {\em a poset} in short, if the relation
$\leq$ is (i) reflexive, i.e., $p\leq p$ for any $p\in P$, (ii)
transitive, i.e., $p\leq q$ and $q\leq r$ implies $p\leq r$ for any
$p,q,r\in P$, and (iii) anti-symmetric, i.e., $p\leq q$ and $q\leq p$
implies $p=q$ for any $p,q\in P$.  If only the conditions (i)-(ii) are
satisfied we call $\leq$ a {\em preorder}.  For every preordered set
$(P,\leq)$ there is an opposite preordered set $(P,\leq)^\op$ given by
$P=P^\op$ and $p\leq^\op q$ if and only if $q\leq p$ for any $p,q\in
P$.

A poset $(P,\leq)$ is called {\em a semi-lattice} if for every $p,q\in
P$ there exists an element $p\vee q$ such that (i) $p\leq p\vee q$,
(ii) $q\leq p\vee q$, and (iii) if $r\in P$ is an element which
satisfies $p\leq r$ and $q\leq r$ then $p\vee q\leq r$.  The binary
operation $\vee$ is called {\em the join}.  A poset is called {\em a
  lattice} if both $(P,\leq)$ and $(P,\leq)^\op$ are semi-lattices.
The join operation in $P^\op$ is called {\em the meet}, and
traditionally denoted by $\wedge$.  One can equivalently define a
lattice $P$ as a set with two binary associative commutative and
idempotent operations $\vee$ and $\wedge$.  These operations satisfy
two absorption laws: $p = p\vee(p\wedge q)$ and $p= p\wedge(p\vee q)$
for any $p,q\in P$.  A lattice $(P,\vee,\wedge)$ is called {\em
  distributive} if one has $p\wedge(q\vee r) = (p\wedge q)\vee(p\wedge
r)$ for any $p,q,r\in P$.  Note that one can prove that the
distributivity of meet over join we have here is equivalent to the
distributivity of join over meet.

  Let $(P,\leq)$ be a preordered set, and let $\upset p := \{q\in P|\
  p\leq q\}$ for any $p\in P$. As a natural extension of notation, we
  define $\upset U:=\bigcup_{p\in U}\upset p$ for any $U\subseteq P$.
  The subsets $U\subseteq P$ that satisfy $U=\upset U$
  are called {\em upper sets} or {\em dual order ideals}.

Next, let $\Lambda$ be any lattice.
An element $c\in\Lambda$ is
called {\em meet irreducible} if 
$$
(i)\quad c=a\wedge b \quad\Rightarrow\quad
(c=a \quad\text{or}\quad c=b),\qquad
(ii)\quad \exists\; \lambda\in\Lambda:\; \lambda\not\leq c.
$$
The set of meet irreducible elements of the lattice $\Lambda$ is denoted
$\Mirred(\Lambda)$.  The {\em join irreducibles} $\Jirred(\Lambda)$
are defined dually.
 {\em Birkhoff's Representation Theorem}
\cite{Birkhoff:Rings} states that, if $\Lambda$ is a {\em finite
distributive lattice}, then the map
\begin{equation}
  \Lambda\ni a\longmapsto \{x\in\Mirred(\Lambda)\;|\;x\geq a\}=
  \Mirred(\Lambda)\cap\upset{a}\in \Up(\Mirred(\Lambda))
\end{equation}
assigning to $a$ the set of  meet
irreducible elements ${}\geq a$ is a lattice
isomorphism between $\Lambda$ and the lattice $\Up(\Mirred(\Lambda))$ of
upper sets of meet-irreducible elements of $\Lambda$ with $\cap$ and $\cup$
as its join and meet, respectively. We refer to this isomorphism as the
Birkhoff transform. Let us observe that it is analogous to the Gelfand
 transform: every finite
distributive lattice is the lattice of upper sets of a certain poset
just as every unital commutative C*-algebra is the algebra of
continuous functions on a certain
compact Hausdorff space.

As an immediate consequence of Birkhoff's Representation Theorem, one
sees that two finite distributive lattices are isomorphic if and only
if their posets of meet irreducibles are isomorphic.  In particular,
 consider a free distributive lattice generated by
$\lambda_0,\ldots,\lambda_N$, i.e., a lattice enjoying the universal property
that it admits a lattice homomorphism into any distributive lattice generated
by $N+1$ elements. It is isomorphic to the lattice of  non-empty
upper sets of the set of  non-empty subsets of $\oN$ (e.g.,
see \cite[Sect.~2.2]{HKMZ:PiecewisePrincipalComoduleAlgebras}).
The elements of the form
$\bigvee_{i\in I}\lambda_i$, where
$\emptyset\neq I\subsetneq\oN$, are all meet irreducible and partially ordered
by
\begin{equation}
\label{freeirrposet}
\bigvee_{i\in I}\lambda_i\leq \bigvee_{j\in J}
\lambda_j\quad\text{if and only if}\quad
I\subseteq J,\quad \forall\;
I,J\neq\emptyset,\; I,J\subsetneq\oN.
\end{equation}
In particular, they are all distinct.
Secondly, all meet irreducible elements
must be of the form $\bigvee_{i\in I}\lambda_i$, where
$\emptyset\neq I\subsetneq\oN$.

The first property can be easily deduced
from the upper-set model of a finite free distributive lattice,
and the latter holds for any finite distributive lattice.
Indeed, suppose the contrary, i.e., that there
exists a meet-irreducible
element whose any presentation
$\bigvee_{a\in \alpha}\bigwedge_{i\in a}\lambda_i$ is such that there
is a set $a_0\in\alpha$ that contains
at least two elements. Now,
the finiteness allows us to apply induction, and the
distributivity combined with irreducibility allows us to make
the induction step yielding the desired contradiction.

Thus we conclude the following lemma:
\begin{lemma}\label{buenos}
 A  finitely generated
distributive lattice is free if the
joins of its generators\linebreak
$\left\{
\bigvee_{i\in I}\lambda_i\right\}_{\emptyset\neq I
\subsetneq\oN}$ are all
meet irreducible and satisfy~\eqref{freeirrposet}.
\end{lemma}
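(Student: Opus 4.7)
The approach is to exploit Birkhoff's Representation Theorem in order to reduce the freeness of $\Lambda$ to an isomorphism of posets of meet irreducibles. Let $F$ denote the free distributive lattice on $N+1$ generators $\lambda_0^F,\ldots,\lambda_N^F$; it is finite by the upper-set description recalled just above the lemma. The universal property supplies a surjective lattice homomorphism $\phi\colon F\twoheadrightarrow\Lambda$ with $\phi(\lambda_i^F)=\lambda_i$, and this forces $\Lambda$ to be finite as well. Both $F$ and $\Lambda$ thus fall under the scope of Birkhoff's theorem, and it suffices to prove $|F|=|\Lambda|$, since then $\phi$ will be a surjection between finite sets of equal cardinality, hence a lattice isomorphism.

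To pin down $\Mirred(\Lambda)$ explicitly I would invoke the inductive distributivity argument sketched just before the lemma: in any finite distributive lattice generated by $\lambda_0,\ldots,\lambda_N$, every meet irreducible element has the form $\bigvee_{i\in I}\lambda_i$ for some $\emptyset\neq I\subsetneq\oN$. The first hypothesis of the lemma supplies the converse inclusion, so
\[
\Mirred(\Lambda)=\left\{\bigvee_{i\in I}\lambda_i\;:\;\emptyset\neq I\subsetneq\oN\right\},
\]
and the analogous description (with the free generators $\lambda_i^F$) holds for $\Mirred(F)$.

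Next I would verify that $\phi$ restricts to a poset isomorphism $\Mirred(F)\cong\Mirred(\Lambda)$. Surjectivity is immediate from the two descriptions above, and order preservation is automatic for any lattice homomorphism. The ``only if'' direction in~\eqref{freeirrposet} is precisely the hypothesis that $\bigvee_{i\in I}\lambda_i\leq\bigvee_{j\in J}\lambda_j$ in $\Lambda$ implies $I\subseteq J$, and it delivers both injectivity and order reflection at the level of meet irreducibles. Hence the two posets of meet irreducibles are isomorphic, Birkhoff's theorem yields the equality $|\Lambda|=|\Up(\Mirred(\Lambda))|=|\Up(\Mirred(F))|=|F|$, and the argument is complete.

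The only genuinely substantive point I foresee is the explicit computation of $\Mirred(\Lambda)$, which rests on the inductive distributivity argument already sketched in the paper; the remainder amounts to bookkeeping with finite posets under Birkhoff duality together with a pigeonhole step on a surjection of equal-size finite lattices.
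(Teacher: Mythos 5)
Your argument is correct and follows essentially the same route as the paper: both rest on Birkhoff's Representation Theorem together with the two facts established in the discussion preceding the lemma (every meet irreducible of a finite distributive lattice generated by $\lambda_0,\ldots,\lambda_N$ is of the form $\bigvee_{i\in I}\lambda_i$, and the hypotheses then identify the poset of meet irreducibles of $\Lambda$ with that of the free lattice). The only cosmetic difference is that you package the conclusion as a cardinality argument applied to the canonical surjection $\phi$ from the free lattice, whereas the paper concludes directly from the criterion that two finite distributive lattices are isomorphic if and only if their posets of meet irreducibles are isomorphic.
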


\subsection{Closed covering of {\boldmath$\cpN$}
as an example of a free lattice}
\label{classsectcp}~

In \cite{HKMZ:PiecewisePrincipalComoduleAlgebras},
a closed refinement of the affine covering of $\cpN$ was constructed
as an example of a finite closed covering of a compact Hausdorff space.
Let us recall this construction.
The elements of this covering are given by:
\begin{equation}
V_i:=\{[x_0:\ldots:x_N]\;|\ |x_i|=\max\{|x_0|,\ldots,|x_N|\}\},\quad
i\in\ord{N}.
\end{equation}
It is easy to see that the family $\{V_i\}_{i\in\ord{N}}$ of closed
subsets of $\cpN$ is a covering of $\cpN$, i.e., $\bigcup_iV_i=\cpN$.
This covering is interesting because of its following property:
\begin{prop}
  The distributive
lattice $\Lambda$ generated by the subsets $V_i\subset\cpN$, $i\in\ord{N}$,
 is free.
\end{prop}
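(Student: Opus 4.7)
The plan is to apply Lemma~\ref{buenos}: it suffices to verify that the joins $\bigcup_{i\in I}V_i$, for $\emptyset\neq I\subsetneq\ord{N}$, (a) satisfy $\bigcup_{i\in I}V_i\subseteq\bigcup_{j\in J}V_j$ exactly when $I\subseteq J$, and (b) are each meet-irreducible in $\Lambda$. The main device will be a family of test points. For every non-empty $S\subseteq\ord{N}$ let $p_S:=[x_0:\ldots:x_N]\in\cpN$ be defined by $x_i=1$ for $i\in S$ and $x_i=0$ otherwise; since the maximum of $|x_0|,\ldots,|x_N|$ is attained precisely on $S$, we have $p_S\in V_i$ if and only if $i\in S$.

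For (a), the implication $\Leftarrow$ is trivial, and conversely any $i_0\in I\setminus J$ gives $p_{\{i_0\}}\in V_{i_0}\subseteq \bigcup_{i\in I}V_i$ but $p_{\{i_0\}}\notin V_j$ for every $j\in J$, ruling out the inclusion. Observe also that $\bigcup_{i\in I}V_i\neq\cpN$ whenever $I$ is proper (witnessed by $p_{\{j\}}$ for any $j\notin I$), so the second clause in the definition of meet-irreducibility holds automatically.

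The substantive step is (b). Suppose $\bigcup_{i\in I}V_i=X\cap Y$ in $\Lambda$; the goal is to show that $X$ or $Y$ equals $\bigcup_{i\in I}V_i$. Using distributivity, I would write each factor in conjunctive normal form
\[
X=\bigcap_{\alpha\in\C{A}}\bigcup_{i\in\alpha}V_i,\qquad Y=\bigcap_{\beta\in\C{B}}\bigcup_{i\in\beta}V_i,
\]
with $\C{A},\C{B}$ finite families of non-empty subsets of $\ord{N}$. From $X,Y\supseteq\bigcup_{i\in I}V_i$, part (a) forces each such $\alpha$ to be either $\ord{N}$ (in which case $\bigcup_{i\in\alpha}V_i=\cpN$ and the term can be dropped) or to contain $I$; similarly for $\beta$. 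If some remaining $\alpha$ equals $I$ then $X\subseteq\bigcup_{i\in I}V_i$, forcing $X=\bigcup_{i\in I}V_i$, and we are done; analogously for $\beta$. Otherwise every remaining $\alpha$ and $\beta$ strictly contains $I$, so each of them meets $S:=\ord{N}\setminus I$ (which is non-empty because $I$ is proper), giving $p_S\in\bigcup_{i\in\alpha}V_i$ and $p_S\in\bigcup_{i\in\beta}V_i$. Hence $p_S\in X\cap Y=\bigcup_{i\in I}V_i$, contradicting $S\cap I=\emptyset$. The delicate step is this bookkeeping in the conjunctive normal form---discarding the trivial terms $\alpha=\ord{N}$, isolating the case $\alpha=I$, and only then constructing the witness $p_{\ord{N}\setminus I}$ to derive the contradiction.
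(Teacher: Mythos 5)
Your argument is correct, but it takes a different route from the paper's. The paper proves this proposition by exhibiting an explicit lattice isomorphism between $\Lambda$ and the lattice $\Upsilon$ of non-empty upper sets of non-empty subsets of $\ord{N}$ (the standard model of the free distributive lattice), via the mutually inverse maps $R$ and $L$; you instead verify the hypotheses of Lemma~\ref{buenos} directly --- the poset condition on the joins $\bigcup_{i\in I}V_i$ and their meet-irreducibility --- which is precisely the strategy the paper reserves for the noncommutative case (Lemmas~\ref{posetisomfree} and~\ref{meetirredl} leading to Theorem~\ref{mainfree}). The essential common ingredient is the same: your witness points $p_S$ are the paper's points $z_a$, characterized by $z_a\in V_b\Leftrightarrow b\subseteq a$. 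What your approach buys is a closer structural parallel with the quantum proof, making the classical statement a genuine special case of the same argument; your conjunctive-normal-form bookkeeping in step (b) is the set-theoretic shadow of the paper's Lemma~\ref{meetirredl}. What the paper's approach buys is the explicit isomorphism with $\Upsilon$ (hence an explicit description of every element of $\Lambda$), at the cost of having to verify separately that $R$ is a lattice map. One cosmetic remark: in your case analysis the alternatives ``$\alpha=\ord{N}$'' and ``$\alpha$ contains $I$'' are not exclusive, and the terms with $\alpha=\ord{N}$ need not be discarded at all --- any $\alpha\supsetneq I$ (including $\alpha=\ord{N}$) meets $\ord{N}\setminus I$ and therefore admits $p_{\ord{N}\setminus I}$, so the trichotomy collapses to ``some $\alpha$ equals $I$'' versus ``all $\alpha$ strictly contain $I$.''
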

\begin{proof}
We prove the freenes of $\Lambda$ by showing that $\Lambda$ is isomorphic as a lattice with the lattice $\Upsilon$ of
non-empty upper sets of non-empty subsets of $\ord{N}$, which is a well known model of a free distributive lattice
(see, e.g.,~\cite{Birkhoff:Lattices}). For brevity, if $\emptyset\neq
a\subseteq\ord{N}$, we write
$V_a:=\bigcap_{i\in a}V_i$. Note that any $V\in\Lambda$ can be written as
 $V=\bigcup_{a\in A}V_a$ for some set $A$ of subsets of~$\oN$.
 We want to show that the following two maps are
 mutually inverse lattice isomorphisms:
\begin{gather}
R:\Upsilon\ni X\longmapsto \bigcup_{a\in X}V_a\in\Lambda,\quad
L:\Lambda\ni V\longmapsto \{a\in\ord{N}\;|\;V_a\subseteq V\}\in\Upsilon.
\end{gather}
For a proof that $R$ is a lattice map see,
e.g.,~\cite[Sect.~2.2]{HKMZ:PiecewisePrincipalComoduleAlgebras}.
The equality $R\circ L=\id$ is immediate.

The other equality $L\circ R=\id$ can be proven as follows.
Put
$
z_a:=[x_0:\ldots:x_N]\in\cpN,
$
 where $|x_i|=\max\{|x_0|,\ldots,|x_N|\}\Leftrightarrow i\in a$.
 Then one can easily see that
$
z_a\in V_b\Leftrightarrow b\subseteq a
$.
Hence $z_a\in V\Leftrightarrow V_a\subseteq V$, for all $V\in\Lambda$.
Therefore,   $a\in L(R(X))$ if and only if $z_a\in R(X)$, for all $X\in\Upsilon$.
 Finally, using again the  property
  $
z_a\in V_b\Leftrightarrow b\subseteq a
$
and the fact that $X$ is an upper set, we see
that $z_a\in R(X)$ if and only if $a\in X$.
\end{proof}

Now we  use the covering $\{V_i\}_{i\in\ord{N}}$
to present  $\cpN$ as a multipushout,
and, consequently, its C*-algebra $C(\cpN)$ as a multipullback.
To this end, we first define a family of homeomorphisms:
\begin{gather}
  \psi_i:V_i\longrightarrow D^{\times N} := \underbrace{D\times\ldots\times D}_{N\ \text{times}},\nonumber\\
  [x_0:\ldots:x_N]\longmapsto \left(
    \frac{x_0}{x_i},\ldots,\frac{x_{i-1}}{x_i},\frac{x_{i+1}}{x_i},\ldots,\frac{x_N}{x_i}
  \right),
\end{gather}
for all $i\in\ord{N}$, from $V_i$ onto the Cartesian product of
$N$-copies of $1$-disk. The inverses of the maps $\psi_i$ are given explicitly
by
\begin{equation}
\psi_i^{-1}:D^{\times N}\ni(d_1,\ldots,d_N)\longmapsto
[d_1:\ldots:d_i:1:d_{i+1}:\ldots:d_N]\in\cpN.
\end{equation}
Pick indices $0\leq i<j\leq N$ and consider the following commutative
diagram:
\begin{equation}\label{classdiag}
\xymatrix{
 & & \cpN & & \\
 D^{\times N} \ar@{-->}[urr] &
V_i \ar[l]_-{\psi_i}\ar@{^{(}->}[ur] & &
V_j\ar[r]^-{\psi_j}\ar@{_{(}->}[ul] &
D^{\times N}\ar@{-->}[ull]\\
D^{\times {j-1}}\times S\times D^{\times {N-j}} \ar@{^{(}->}[u] & &
V_i\cap V_j \ar@{_{(}->}[ul] \ar@{^{(}->}[ur] \ar[ll]_-{\psi_{ij}} \ar[rr]^-{\psi_{ji}} & &
D^{\times i}\times S\times D^{\times {N-i-1}}. \ar@<2ex>@{_{(}->}[u]
}
\end{equation}
Here, for
\begin{equation}
k=\begin{cases}
    n & \text{ if } m<n,\\
  n+1 & \text{ if } m>n,
\end{cases}
\end{equation}
we have
\begin{equation}
\psi_{mn}:=\left.\psi_m\right|_{V_m\cap V_n}:
V_m\cap V_n\longrightarrow D^{\times {k-1}}\times S\times D^{\times {N-k}}.
\end{equation}
In other words, counting from $1$, the $1$-circle $S$ appears on the
$k$th position among disks.  It follows immediately from the
definition of $\psi_i$ that the maps
\begin{equation}
\Upsilon_{ij}:=\psi_{ji}\circ\psi_{ij}^{-1}:D^{\times {j-1}}\times S
\times D^{\times {N-j}}
\longrightarrow D^{\times i}\times S\times D^{\times {N-i-1}},\quad
i<j,
\end{equation}
can be  explicitly written as
\begin{multline}\label{PsiExplicitClass}
\Upsilon_{ij}(d_1,\ldots,d_{j-1},s,d_{j+1},\ldots,d_N) = \\
(s^{-1}d_1,\ldots,s^{-1}d_i,s^{-1},s^{-1}d_{i+1},\ldots,
s^{-1}d_{j-1},s^{-1}d_{j+1},\ldots,s^{-1}d_N).
\end{multline}
One sees from Diagram~\eqref{classdiag} that $\cpN$ is homeomorphic to
the disjoint union $\bigsqcup_{i=0}^N D^{\times N}_i$ of $(N+1)$-copies
of $D^{\times N}$ divided by the identifications prescribed by
the the following diagrams indexed by $i,j\in\ord{N}$, $i<j$,
\begin{equation}\label{colimcpn}
\xymatrix{
D^{\times N}_i &  & \!\!\!\!\!\!D^{\times N}_j\\
D^{\times {j-1}}\times S\times D^{\times {N-j}} \ar@{_{(}->}[u] \ar[rr]^
{\Upsilon_{ij}}  & &
D^{\times i}\times S\times D^{\times {N-i-1}}. \ar@<2ex>@{_{(}->}[u]
}
\end{equation}

Consequently, one sees that the C*-algebra $C(\cpN)$ of continuous
functions on $\cpN$ is isomorphic
 with the subalgebra of  $\prod_{i=0}^N C(D)^{\otimes N}_i$
defined by the compatibility conditions given by the diagrams dual to
the diagrams~\eqref{colimcpn}:
\begin{equation}\label{colimcpndu}
\xymatrix{
C(D)^{\otimes N}_i \ar@{->>}[d]&  & \!\!\!\!\!\!C(D)^{\otimes N}_j
\ar@<-2ex>@{->>}[d]\\
C(D)^{\otimes {j-1}}\otimes C(S)\otimes C(D)^{\times {N-j}}    & &
\ar[ll]_{\Upsilon_{ij}^*}
C(D)^{\otimes i}\otimes C(S)\otimes C(D)^{\otimes {N-i-1}}.
}
\end{equation}

\section{The multipullback C*-algebra of $\qcpN$}

As a starting point for our noncommutative deformation of a
complex projective space, we take the diagrams~\eqref{colimcpndu}
from
Section~\ref{classsectcp} and replace the
algebra $C(D)$ of continuous functions
on the unit disk by the Toeplitz algebra $\Tplz$  considered
as the algebra of continuous functions on a quantum
disk~\cite{KlimekLesniewski:QuantumDisk}.
Recall that the
Toeplitz algebra is the universal
C*-algebra generated by $z$ and $z^*$
satisfying
$z^\ast z=1$.
There is a well-known short exact sequence of C*-algebras
\begin{equation}
0\longrightarrow \Cpct\longrightarrow\Tplz
\stackrel{\sigma}{\longrightarrow} C(S^1)
\longrightarrow 0.
\end{equation}
Here $\sigma$ is the so-called symbol map defined by mapping $z$ to
the unitary generator $u$  of the algebra $C(S^1)$
of continuous functions
on a circle.  Note that the kernel
of the symbol map is the algebra $\Cpct$ of compact
operators.

Viewing $S^1$ as the unitary group $U(1)$, we obtain
 a compact quantum
group structure on the algebra~$C(S^1)$. Here the antipode is
 determined by $S(u)=u^{-1}$, the counit
by $\varepsilon(u)=1$, and finally the comultiplication
by $\Delta(u)=u\otimes u$. Using this Hopf-algebraic terminology
on the C*-level makes sense due to the commutativity of~$C(S^1)$.
The coaction of $C(S^1)$ on $\Tplz$ comes from the gauge action of
  $U(1)$ on $\Tplz$  that rescales $z$
by the elements of $U(1)$, i.e., $z \mapsto \lambda z$. Explicitly,
we have:
\begin{equation}
\label{gaugecoact}
\rho:\Tplz\longrightarrow\Tplz\otimes C(S^1)\cong C(S^1,\Tplz),
\quad \rho(z):=z\otimes u,\quad \rho(z)(\lambda)=\lambda z,
\quad \rho(t)=:t\sw{0}\otimes t\sw{1}.
\end{equation}
Next, we employ  the multiplication map $m$ of $C(S^1)$ and
the flip map
\begin{equation}
  \CS\otimes\Tpn{n}\ni f\otimes t_1\otimes\cdots\otimes t_n
\stackrel{\tau_n}{\longmapsto}
  t_1\otimes\cdots\otimes t_n\otimes f\in\Tpn{n}\otimes\CS
\end{equation}
 to extend $\rho$
to the diagonal
coaction
$
  \rho_n:\Tpn{n}\longrightarrow\Tpn{n}\otimes\CS
$
defined inductively by
\begin{equation}
  \rho_1=\rho,\quad
  \rho_{n+1}=(\id_{\Tpn{n+1}}\otimes m)\circ(\id_\Tplz\otimes\tau_n\otimes\id_{\CS})\circ
  (\rho\otimes\rho_n).
\end{equation}

Furthermore,
for all $0\leq i<j \leq N$, we define an isomorphism $\Psi_{ij}$
\begin{equation}
  \chi_j\circ\Psi\circ\chi^{-1}_{i+1}:\;
\Tpn{i}\otimes\CS\otimes\Tpn{N-i-1}\stackrel{\Psi_{ij}}{\longrightarrow}
\Tpn{j-1}\otimes\CS
\otimes\Tpn{N-j}\,.
\end{equation}
Here $\chi_j$ is given by
\begin{equation}
 \id_{\Tpn{j-1}}\otimes\tau_{N-j}^{-1}:\Tpn{N-1}\otimes\CS
\stackrel{\chi_j}{\longrightarrow}\Tpn{j-1}\otimes\CS\otimes\Tpn{N-j}
\end{equation}
and $\Psi$ by
\begin{gather}
 (\id_{\Tpn{N-1}}\otimes(S\circ m))\circ(\rho_{N-1}\otimes\id_{\CS}):\;
\Tpn{N-1}\otimes\CS\stackrel{\Psi}{\longrightarrow}\Tpn{N-1}\otimes\CS.
\end{gather}
Before proceeding further, let us prove the unipotent property
of $\Psi$, which we shall need later on.
\begin{lemma}\label{unipotent}
 $\Psi\circ\Psi=\id_{\Tpn{N-1}\otimes\CS}$\,.
\end{lemma}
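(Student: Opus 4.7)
The plan is to unpack $\Psi$ in Sweedler notation and then collapse $\Psi\circ\Psi$ to the identity using coassociativity of $\rho_{N-1}$, the antipode axiom $S(a\sw{1})a\sw{2}=\varepsilon(a)\,1$, and the counit axiom. Using~\eqref{gaugecoact} and extending the Sweedler notation to $\rho_{N-1}$, I would first record
\begin{equation*}
\Psi(t\otimes f) \;=\; t\sw{0}\otimes S(t\sw{1} f),\qquad t\in\Tpn{N-1},\ f\in\CS.
\end{equation*}
Two facts about $\CS$ are essential and I would state them at the start: since $\CS$ is commutative, the antipode $S$ is simultaneously a homomorphism and an anti-homomorphism, and $S^2=\id_{\CS}$ (because $S(u)=u^{-1}$ on the unitary generator).

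Applying $\Psi$ a second time to $t\sw{0}\otimes S(t\sw{1})S(f)$ and then using multiplicativity of $S$ together with $S^2=\id_{\CS}$ to expand the outer antipode, I would obtain
\begin{equation*}
\Psi\circ\Psi(t\otimes f) \;=\; (t\sw{0})\sw{0}\otimes S\bigl((t\sw{0})\sw{1}\bigr)\,t\sw{1}\,f.
\end{equation*}
I would then invoke coassociativity of $\rho_{N-1}$, i.e.\ $(\rho_{N-1}\otimes\id_{\CS})\circ\rho_{N-1}=(\id_{\Tpn{N-1}}\otimes\Delta)\circ\rho_{N-1}$, to rewrite $(t\sw{0})\sw{0}\otimes(t\sw{0})\sw{1}\otimes t\sw{1}$ as $t\sw{0}\otimes(t\sw{1})\sw{1}\otimes(t\sw{1})\sw{2}$, where the latter two legs are the components of $\Delta$ applied to $t\sw{1}$.

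At this point the antipode axiom applied with $a=t\sw{1}\in\CS$ collapses the $\CS$-factor $S\bigl((t\sw{1})\sw{1}\bigr)(t\sw{1})\sw{2}$ into $\varepsilon(t\sw{1})\,1_{\CS}$, and the counit axiom $t\sw{0}\varepsilon(t\sw{1})=t$ for the coaction $\rho_{N-1}$ finishes the calculation, yielding $t\otimes f$. The only real obstacle is notational bookkeeping: tracking the two layers of Sweedler superscripts and keeping clear which of the inner superscripts, after the coassociativity step, refer to the iterated coaction and which to the comultiplication of $\CS$. A small prerequisite is the verification that $\rho_{N-1}$, defined inductively by the diagonal construction from the gauge coaction $\rho$, is a genuine coassociative coaction with counit; this is a routine induction on $n$ which I would carry out (or cite) before running the main calculation.
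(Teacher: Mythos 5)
Your proposal is correct and follows essentially the same route as the paper: both unpack $\Psi$ in Sweedler notation, apply it twice, and collapse the result using coassociativity of the coaction, the antipode and counit axioms, and $S^2=\id_{\CS}$ (the paper keeps the outer antipode and cancels $a\sw{1}S(a\sw{2})=\varepsilon(a)1$ before a final $S\circ S=\id$, while you expand the outer $S$ first and cancel $S(a\sw{1})a\sw{2}=\varepsilon(a)1$ --- a trivial reordering). Your explicit remark that one must first verify $\rho_{N-1}$ is a genuine counital coassociative coaction is a point the paper leaves implicit, but otherwise the two arguments coincide.
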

\begin{proof}
 For any $\bigotimes_{1\leq
  i<N}t_i\otimes h\in \Tplz^{\otimes N}\otimes\CS$, we compute:
\begin{align}\nonumber
(\Psi\circ\Psi)\left(\bigotimes_{1\leq i<N}\!\!t_i\otimes h\right)
&=\Psi\left(\bigotimes_{1\leq i<N}\!\!t\swn{i}{0}\otimes S(\prod_{1\leq i<N}\!\!t\swn{i}{1}h)\right)\\ \nonumber
&=\bigotimes_{1\leq i<N}\!\!t\swn{i}{0}\otimes S\left((\prod_{1\leq i<N}\!\!t\swn{i}{1})
S(\prod_{1\leq j<N}\!\!t\swn{j}{2}h)\right)\\ \nonumber
&=\bigotimes_{1\leq i<N}\!\!t\swn{i}{0}\otimes S\left((\prod_{1\leq i<N}\!\!(t\swn{i}{1})S(t\swn{i}{2}))S(h)\right)\\
&=\bigotimes_{1\leq i<N}\!\!t_i\otimes h.
\end{align}
\vspace*{-9mm}

\end{proof}

Finally, to justify our construction of a quantum complex projective
space,
 observe that
the map $\Psi_{ij}$ can be easily seen as an analogue of the pullback
of the
map $\Upsilon_{ij}$ of~\eqref{PsiExplicitClass}.
\begin{dfn}\label{qcpn}
We define the C*-algebra
$C(\qcpN)$ as the limit of the diagram:
\begin{equation*}\label{colimqcpn}
\xymatrix@R=2mm@C=0.15cm{
0 & \ldots & i & \ldots & j & \ldots & N\\
\Tpn{N} & \ldots & \Tpn{N}\ar[dddd]_{\sigma_j}
& \ldots & \!\!\!\!\!\!\Tpn{N}  \ar@<-2ex>[dddd]^{\sigma_{i+1}}
& \ldots & \Tpn{N}\\
{\ }\\
{\ }\\
{\ }\\
\ldots & \ldots & \Tpn{j-1}\otimes\CS\otimes \Tpn{N-j}
  & &
\Tpn{i}\otimes\CS\otimes \Tpn{N-i-1} \ar[ll]_{\Psi_{ij}}
& \ldots & \ldots\;.
}
\end{equation*}
Here we take all $i,j\in\oN$, $i<j$, and define
$\sigma_k:=\id_{\Tpn{k-1}}\otimes\sigma\otimes \id_{\Tpn{N-k}}$,
$k\in \{1,\ldots,N\}$. 
We call $\,\qcpN$ a {$\,$\em Toeplitz quantum complex projective
 space}.
\end{dfn}

Note that by definition $C(\qcpN)\subseteq
\prod_{i=0}^N\Tpn{N}$. We will denote the restrictions of the canonical
projections on the components by
\begin{equation}\label{canproj}
\pi_i:C(\qcpN)\longrightarrow\Tpn{N},\qquad \forall\;i\in\oN.
\end{equation}
 Since these maps are C*-homomorphisms, the lattice generated
by their kernels is automatically distributive. On the other hand,
 it follows from Lemma~\ref{surjprop} that any element in the
Toeplitz cube $\Tplz^{\otimes n}$  can be complemented into
a sequence that is an element of $C(\qcpN)$. This means that
the maps \eqref{canproj} are surjective. Hence
they form
 a covering of~$C(\qcpN)$.

The construction of $\qcpN$ is a generalization of the construction of
the mirror quantum sphere \cite[p.~734]{HajacMatthesSzymanski:Mirror},
i.e.,
$\mathbb{P}^1(\C{T})$ is the mirror quantum sphere:
\begin{equation}
  C(\B{P}^1(\C{T})):=\{(t_0,t_1)\in \Tplz\times \Tplz\;|
\;\sigma(t_0)=S(\sigma(t_1))\}.
\end{equation}
Removing $S$ from this definition
yields the C*-algebra of the
generic Podle\'s sphere \cite{Podles:Spheres}. The latter not only
is not isomorphic with  $C(\B{P}^1(\C{T}))$, but also is not
Morita equivalent to
$C(\B{P}^1(\C{T}))$~\cite[Prop.~2.3]{HajacMatthesSzymanski:Mirror}.
We conjecture that, by similar changes
in maps $\Psi_{ij}$,
we can create non-equivalent
quantum spaces also for $N>1$.

\section{The defining covering lattice of $\qcpN$ is free}

The goal of this
 section is to demonstrate that the distributive lattice of
ideals generated by the kernels
 $\ker\pi_i$ is free.  To this end, we will need
to know whether the tensor products $\Tplz^{\otimes N}$ of Toeplitz
algebras glue together to form $\qcpN$ in such a way that a partial
gluing can be always extended to a full space.  The following result
gives the sufficient conditions:
\begin{prop}\cite[Prop.~9]{CalowMatthes:Glueing} \label{MattProp}
 Let $\{B_i\}_{i\in\oN}$ and
  $\{B_{ij}\}_{i,j\in\oN,\,i\neq j}$ be two families of
C*-algebras
  such that
  $B_{ij}=B_{ji}$\,, and let $\{\pi^i_j:B_i\rightarrow B_{ij}\}_{ij}$
be a family of surjective C*-algebra maps. Also,
let $\pi_i:B\rightarrow B_i$, $i\in\oN$,
  be the restrictions to
$$
B:=\{(b_i)_i\in\mbox{$\prod_{i\in\oN}$}B_i\;|\;\pi^i_j(b_i)=\pi^j_i(b_j),\;\forall\,
i,j\in\oN,\;i\neq j\}
$$
 of the canonical projections.  Assume that, for all triples
of distinct indices $i,j,k\in\oN$,
\begin{enumerate}
\item $\pi^i_j(\ker\pi^i_k)=\pi^j_i(\ker\pi^j_k)$,
\vspace{1mm}
\item the isomorphisms $\pi^{ij}_k\colon
  B_i/(\ker\pi^i_j+\ker\pi^i_k)\longrightarrow
  B_{ij}/\pi^i_j(\ker\pi^i_k)$ defined as
  \begin{align*}
    &b_i+\ker\pi^i_j+\ker\pi^i_k\longmapsto \pi^i_j(b_i)+\pi^i_j(\ker\pi^i_k)
  \\
  \text{ satisfy }\quad  &(\pi^{ik}_j)^{-1}\circ\pi^{ki}_j=(\pi^{ij}_k)^{-1}\circ\pi^{ji}_k\circ(\pi^{jk}_i)^{-1}\circ\pi^{kj}_i.
  \end{align*}
\end{enumerate}\vspace*{-5mm}
\begin{align*}
\text{Then, }\;&\Lall(b_i)_{i\in I}\in\mbox{$\prod_{i\in I}$}\;
B_i,\;  I\subseteq\oN,
\text{ such that }
\pi^i_j(b_i)=\pi^j_i(b_j),\, \forall\; i,j\in I,\,i\neq j,
\\
&\Lexists
(c_i)_{i\in\oN}\in\mbox{$\prod_{i\in\oN}$}\; B_i : \:
\pi^i_j(c_i)=\pi^j_i(c_j),\,\forall\; i,j\in\oN,\,i\neq j, \text{ and }
c_i=b_i, \forall\; i\in I.
\end{align*}
\end{prop}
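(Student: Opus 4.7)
The plan is to proceed by induction on $n:=|\oN\setminus I|$. The base case $n=0$ is vacuous, so the substance of the proof is the inductive step: given a family $(b_i)_{i\in I}$ compatible over $I$, pick a fresh index $k\in\oN\setminus I$ and produce $c_k\in B_k$ satisfying $\pi^k_i(c_k)=\pi^i_k(b_i)$ for every $i\in I$. Any such $c_k$ extends the family compatibly over $I\cup\{k\}$, whereupon the inductive hypothesis applies.

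For the one-step extension, set $x_i:=\pi^i_k(b_i)\in B_{ki}=B_{ik}$ for $i\in I$, so the task becomes to lift the tuple $(x_i)$ simultaneously along the surjections $\{\pi^k_i\colon B_k\to B_{ki}\}_{i\in I}$. I would isolate this as an auxiliary Chinese-Remainder lemma: for a finite family of surjective C*-algebra homomorphisms $\{\pi_i\colon B\to B_i\}$ whose kernels satisfy the pairwise analogues of assumption~(1), a tuple $(x_i)\in\prod_i B_i$ admits a common lift precisely when for all $i\neq j$ the classes of $x_i$ and $x_j$ agree in $B/(\ker\pi_i+\ker\pi_j)$ via the isomorphisms supplied by~(1). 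This auxiliary lemma follows by a short induction on the number of indices, whose base case is the standard surjectivity of the pullback of two surjective C*-homomorphisms along a common quotient.

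The only genuinely substantive step is therefore to verify the hypothesis of this lemma for $(x_i)_{i\in I}$: for distinct $i,j\in I$ the classes of $x_i$ in $B_{ki}/\pi^k_i(\ker\pi^k_j)$ and $x_j$ in $B_{kj}/\pi^k_j(\ker\pi^k_i)$ must correspond to the same class of $B_k/(\ker\pi^k_i+\ker\pi^k_j)$ under the isomorphisms $\pi^{ki}_j,\pi^{kj}_i$ of assumption~(1). The input is the given equality $\pi^i_j(b_i)=\pi^j_i(b_j)$ in $B_{ij}=B_{ji}$, and the bridge is precisely the cocycle identity of assumption~(2) attached to the triangle $\{i,j,k\}$: its left-hand composite realises the direct $ki$-identification of $B_k/(\ker\pi^k_i+\ker\pi^k_j)$ with $B_i/(\ker\pi^i_k+\ker\pi^i_j)$, while its right-hand composite realises the same identification by routing through the $kj$- and $ji$-identifications. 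Applying $(\pi^{ik}_j)^{-1}\circ\pi^{ki}_j$ to both putative images of the yet-unknown $c_k$ reduces the equality to the assertion that the $ij$-identification $(\pi^{ij}_k)^{-1}\circ\pi^{ji}_k$ sends the class of $b_j$ to the class of $b_i$, which is precisely the given compatibility $\pi^i_j(b_i)=\pi^j_i(b_j)$. The main technical obstacle is this diagram chase: carrying the ideal equalities of~(1) through the identifications $B_{ij}=B_{ji}$ unambiguously, and reading~(2) as commutativity of the relevant hexagon over $\{i,j,k\}$. Once it is in place, the auxiliary lemma delivers $c_k$ and closes the induction.
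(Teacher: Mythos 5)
The paper offers no proof of this proposition --- it is imported verbatim from Calow and Matthes --- so there is nothing internal to compare your argument against; I judge it on its own terms. Your strategy (extend the compatible family one index at a time, reduce the one-step extension to a simultaneous lifting problem over $B_k$, and use condition (1) together with the cocycle identity (2) for the triangle $\{i,j,k\}$ to verify pairwise compatibility of the data $x_i=\pi^i_k(b_i)$) is sound, and the diagram chase is correct: condition (2) is exactly the commutativity of the triangle of induced identifications between the quotients $B_\ell/(\ker\pi^\ell_m+\ker\pi^\ell_n)$, and combining it with $\pi^{ij}_k([b_i])=\pi^{ji}_k([b_j])$ (which is the given relation $\pi^i_j(b_i)=\pi^j_i(b_j)$ read in the quotient) yields precisely the agreement of $(\pi^{ki}_j)^{-1}([x_i])$ and $(\pi^{kj}_i)^{-1}([x_j])$ in $B_k/(\ker\pi^k_i+\ker\pi^k_j)$.

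The one genuine gap is in your auxiliary Chinese--Remainder lemma. Its two-index case is indeed the standard surjectivity of the pullback of two surjections onto a common quotient, but the inductive step is \emph{not} a formal consequence of it. Having lifted $(x_i)_{i<n}$ to some $b$, you know only that $\pi_n(b)-x_n$ lies in $\bigcap_{i<n}\bigl((\ker\pi_i+\ker\pi_n)/\ker\pi_n\bigr)$, whereas to correct $b$ by an element of $\bigcap_{i<n}\ker\pi_i$ you need it to lie in $\bigl(\bigl(\bigcap_{i<n}\ker\pi_i\bigr)+\ker\pi_n\bigr)/\ker\pi_n$. The required identity $\bigcap_{i<n}(\ker\pi_i+\ker\pi_n)=\bigl(\bigcap_{i<n}\ker\pi_i\bigr)+\ker\pi_n$ fails for ideals in a general ring, so the ``short induction'' does not go through formally; it must be supplied by two C*-algebra-specific facts, namely that the sum of two closed two-sided ideals is closed and that the lattice of closed ideals of a C*-algebra is distributive (e.g.\ because $I\cap J=\overline{IJ}$ for closed ideals, or via the identification of closed ideals with open subsets of the primitive spectrum). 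With that lemma stated and proved explicitly, your argument closes.
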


In the case of quantum projective spaces $\qcpN$,
 we can translate algebras and maps from Proposition~\ref{MattProp}
as follows:
\begin{gather}
B_i=\Tpn{N},\quad
B_{ij}=\Tpn{j-1}\otimes\CS\otimes \Tpn{N-j},\quad\text{where}\quad i<j,\\
\pi^i_j=\left\{
\begin{array}{ccc}
\sigma_j & \text{when} & i<j,\\
\Psi_{ji}\circ\sigma_{j+1} &\text{when} & i>j.
\end{array}
\right.
\end{gather}
It follows that
\begin{equation}
\ker\pi^i_j=
\begin{cases}
  \ker\sigma_j=\Tpn{j-1}\otimes\Cpct\otimes \Tpn{N-j} & \text{when\ }  i<j,\\
  \ker\sigma_{j+1}=\Tpn{j}\otimes\Cpct\otimes\Tpn{N-j-1} & \text{when\ }  i>j.
\end{cases}
\end{equation}
Since $\rho(\Cpct)\subseteq\Cpct\otimes\CS$ and $\Psi$ is an
isomorphism by Lemma~\ref{unipotent}, it follows that
\begin{equation}
\Psi(\Tpn{j-1}\otimes\Cpct\otimes \Tpn{N-j-1}\otimes\CS)=\Tpn{j-1}\otimes\Cpct\otimes \Tpn{N-j-1}\otimes\CS.
\end{equation}
Now we can formulate and prove the following:
\begin{lemma}\label{surjprop}
If $(b_i)_{i\in I}\in\prod_{i\in I\subseteq\oN}\Tpn{N}$
satisfies
$\pi^i_j(b_i)=\pi^j_i(b_j)$ for all $i,j\in I$, $i\neq j$,
then there exists an element
$b\in C(\qcpN)$ such that $\pi_i(b)=b_i$ for all $i\in I$.
\end{lemma}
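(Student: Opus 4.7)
The plan is to deduce the lemma directly from Proposition~\ref{MattProp}, so the only work is to verify its two hypotheses in the concrete setting of $\qcpN$, with the algebras $B_i, B_{ij}$ and the maps $\pi^i_j$ set up in the paragraph immediately preceding the lemma.

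For hypothesis (1), that $\pi^i_j(\ker\pi^i_k)=\pi^j_i(\ker\pi^j_k)$ for distinct $i,j,k\in\oN$, I would proceed by case analysis on the relative order of $i,j,k$. The key input is the explicit identification
\[
\ker\pi^i_j=\begin{cases}\Tpn{j-1}\otimes\Cpct\otimes\Tpn{N-j}&\text{if }i<j,\\ \Tpn{j}\otimes\Cpct\otimes\Tpn{N-j-1}&\text{if }i>j,\end{cases}
\]
already recorded in the preamble. When no $\Psi_{ji}$ is involved, the equality is an immediate computation with $\sigma_j$, since $\sigma_j$ acts as the identity on every tensor slot other than the $j$-th and sends $\Cpct$ to $0$ in its own slot. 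In the mixed cases where $\pi^i_j=\Psi_{ji}\circ\sigma_{j+1}$, the displayed formula
\[
\Psi(\Tpn{j-1}\otimes\Cpct\otimes\Tpn{N-j-1}\otimes\CS)=\Tpn{j-1}\otimes\Cpct\otimes\Tpn{N-j-1}\otimes\CS,
\]
together with the definition of $\Psi_{ij}$ via the flips $\chi_j$, lets me transport the $\Cpct$ factor across $\Psi_{ji}$ without losing track of its slot. Writing out the three sub-cases $i<j<k$, $i<k<j$ and $k<i<j$ (and their mirror images) this way reduces hypothesis (1) to an identity of distinguished tensor factors.

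For hypothesis (2), the required equality of composed quotient maps
\[
(\pi^{ik}_j)^{-1}\circ\pi^{ki}_j=(\pi^{ij}_k)^{-1}\circ\pi^{ji}_k\circ(\pi^{jk}_i)^{-1}\circ\pi^{kj}_i
\]
is a cocycle condition on the isomorphisms induced by $\Psi_{ij}$ on the triple-intersection quotients. Here the strategy is to exploit two structural facts: Lemma~\ref{unipotent}, which gives $\Psi\circ\Psi=\id$ and hence $\Psi_{ij}^{-1}=\Psi_{ji}$ (after accounting for the flips~$\chi_j$), and the coassociativity of the diagonal coaction $\rho_{N-1}$ together with the antipode axiom $m\circ(S\otimes\id)\circ\Delta=\varepsilon$ on $C(S^1)$, which are exactly the properties that power the computation in the proof of Lemma~\ref{unipotent}. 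The cocycle condition is the algebraic mirror of the classical identity $\Upsilon_{jk}\circ\Upsilon_{ij}=\Upsilon_{ik}$ on triple overlaps, which holds in~\eqref{PsiExplicitClass} by a direct computation with the scalar $s^{-1}$; the noncommutative version follows by the same Sweedler-notation manipulation that appears in the proof of Lemma~\ref{unipotent}, simply iterated one more time.

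I expect the main obstacle to be purely bookkeeping: keeping the indices $i<j$ vs.\ $i>j$, the slot positions of $\Cpct$ and~$\CS$, and the flips $\chi_j$ straight throughout the verification of~(2). There is no genuinely new analytic content beyond what is already in Lemma~\ref{unipotent} and the ideal-preservation equation displayed before the lemma; once hypotheses (1) and~(2) are confirmed, Proposition~\ref{MattProp} yields the required $(c_i)_{i\in\oN}$, and setting $b:=(c_i)_i\in C(\qcpN)$ gives $\pi_i(b)=b_i$ for every $i\in I$, as claimed.
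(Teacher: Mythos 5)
Your proposal follows the paper's own route exactly: the paper's proof consists precisely of verifying hypotheses (1) and (2) of Proposition~\ref{MattProp}, with (1) handled by the same three-way case analysis on the relative order of $i,j,k$ using the explicit kernel formulas and the $\Psi$-invariance of $\Tplz^{\otimes (j-1)}\otimes\Cpct\otimes\Tplz^{\otimes(N-j-1)}\otimes C(S^1)$, and (2) reduced via $(\phi^{ij}_k)^{-1}=\phi^{ji}_k$ to the case $i<k<j$ and then settled by a Heynemann--Sweedler computation resting on the antipode and counit axioms, coassociativity, the colinearity of $\sigma^{-1}$, and the commutativity of $C(S^1)$. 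The one caveat is that this cocycle verification is the bulk of the paper's argument and is substantially longer than the computation in Lemma~\ref{unipotent} (it also needs the commutativity of $C(S^1)$ and the colinearity of $\sigma^{-1}$, which your sketch does not mention), but your outline identifies the correct strategy and the essential ingredients.
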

\begin{proof}
  It is enough to check that the assumptions of
  Proposition~\ref{MattProp} are satisfied.  For the sake of brevity,
  we will omit the tensor symbols in the long formulas in what
  follows. We will also write $\C{S}$ instead of
  $\CS$.
  \renewcommand{\CS}{{\mathcal{S}}}\renewcommand{\Tpn}[1]{\Tplz^{#1}}
  Here we prove the first condition of Proposition~\ref{MattProp}:
\begin{enumerate}
\item
$\pi^j_i(\ker\pi^j_k)
=(\chi_j\circ\Psi\circ\chi_{i+1}^{-1}\circ\sigma_{i+1})(\ker\sigma_{k+1})\\
\phantom{\pi^j_i(\ker\pi^j_k)}=
(\chi_j\circ\Psi\circ\chi_{i+1}^{-1})(\Tpn{i}\CS\Tpn{k-i-1}\Cpct\Tpn{N-k-1})\\
\phantom{\pi^j_i(\ker\pi^j_k)}=
\chi_j(\Tpn{k-1}\Cpct\Tpn{N-k-1}\CS)\\
\phantom{\pi^j_i(\ker\pi^j_k)}=
\Tpn{k-1}\Cpct\Tpn{j-k-1}\CS\Tpn{N-j}\\
\phantom{\pi^j_i(\ker\pi^j_k)}=
\sigma_j(\ker\sigma_k)\\
\phantom{\pi^j_i(\ker\pi^j_k)}=\pi^i_j(\ker\pi^i_k),\quad
\text{when } i<k<j$.\\
\item $\pi^j_i(\ker\pi^j_k)=(\chi_j\circ\Psi\circ\chi_{i+1}^{-1}\circ\sigma_{i+1})(\ker\sigma_k)\\
\phantom{\pi^j_i(\ker\pi^j_k)}=
(\chi_j\circ\Psi\circ\chi_{i+1}^{-1})(\Tpn{i}\CS\Tpn{k-i-2}\Cpct\Tpn{N-k})\\
\phantom{\pi^j_i(\ker\pi^j_k)}
=\chi_j(\Tpn{k-2}\Cpct\Tpn{N-k} \CS)\\
\phantom{\pi^j_i(\ker\pi^j_k)}
=\Tpn{j-1}\CS\Tpn{k-j-1}\Cpct\Tpn{N-k}\\
\phantom{\pi^j_i(\ker\pi^j_k)}
=\sigma_j(\ker\sigma_k)\\
\phantom{\pi^j_i(\ker\pi^j_k)}
=\pi^i_j(\ker\pi^i_k),\quad
\text{when }  i<j<k$.\\
\item $\pi^j_i(\ker\pi^j_k)=(\chi_j\circ\Psi\circ\chi_{i+1}^{-1}\circ\sigma_{i+1})(\ker\sigma_{k+1})\\
    \phantom{\pi^j_i(\ker\pi^j_k)}
=(\chi_j\circ\Psi\circ\chi_{i+1}^{-1})(\Tpn{k}\Cpct\Tpn{i-k-1}\CS\Tpn{N-i-1})\\
\phantom{\pi^j_i(\ker\pi^j_k)}
=\chi_j(\Tpn{k}\Cpct\Tpn{N-k-2}\CS)\\
\phantom{\pi^j_i(\ker\pi^j_k)}
=\Tpn{k}\Cpct\Tpn{j-k-2}\CS\Tpn{N-j}\\
\phantom{\pi^j_i(\ker\pi^j_k)}
=\sigma_j(\ker\sigma_{k+1})\\
\phantom{\pi^j_i(\ker\pi^j_k)}
=\pi^i_j(\ker\pi^i_k),\quad \text{when } k<i<j$.
\end{enumerate}

For the second condition, note first that
for any multivalued map $f:B_j\rightarrow B_i$ we define the
 function
\begin{gather}
[f]^{ij}_k:B_j/(\ker\pi^j_i+\ker\pi^j_k)\longrightarrow
B_i/(\ker\pi^i_j+\ker\pi^i_k),\nonumber\\
b_j+\ker\pi^j_i+\ker\pi^j_k\longmapsto f(b_j)+\ker\pi^i_j+\ker\pi^i_k,
\label{badlydefined}
\end{gather}
whenever the assignement~\eqref{badlydefined} is unique. In particular,
since the condition (1) of Proposition~\ref{MattProp} is fulfilled,
we can write
the map
$\phi^{ij}_k:=(\pi^{ij}_k)^{-1}\circ\pi^{ji}_k$
as $[(\pi^i_j)^{-1}\circ\pi^j_i]^{ij}_k$.
Explicitly, in  our case, this map
reads:
\begin{equation}
\phi^{ij}_k=\left\{
\begin{array}{ccc}
[\sigma_j^{-1}\circ\chi_j\circ\Psi\circ\chi_{i+1}^{-1}\circ\sigma_{i+1}]^{ij}_k & \text{when} & i<j,\\
{}[\sigma_{j+1}^{-1}\circ\chi_{j+1}\circ\Psi\circ\chi_{i}^{-1}\circ\sigma_{i}]^{ij}_k & \text{when} & i>j.
\end{array}
\right.
\end{equation}
We need to prove that
\begin{equation}
\label{cocyclec}
\phi^{ij}_k=\phi^{ik}_j\circ\phi^{kj}_i,\quad\text{for all distinct indices\ }
 i,j,k.
\end{equation}
Since $(\phi^{ij}_k)^{-1}=\phi^{ji}_k$ and, for any invertible elements $g,h,k$, the equality
$k=gh$ can be written as $h=g^{-1}k$, etc.,
one can readily see that it is enough to limit ourselves to the case
when $i<k<j$. 
Next, let
us denote the class of  $(t_1\otimes\cdots\otimes
t_N)=\bigotimes_{1\leq n\leq N}t_n\in\Tplz^{\otimes N}$ in
$\Tplz^{\otimes N}\!\!/(\ker\pi^j_i+\ker\pi^j_k)$ by
$[\bigotimes_{1\leq n\leq N}t_n]^{j}_{ik}$. Then, using
the Heynemann-Sweedler
notation for completed tensor products, we compute:
\begin{align}\nonumber
\phi^{ij}_k\Llp\Lls\!\bigotimes_{1\leq n\leq N}\!\!t_n\Lrs^{j}_{ik}\Lrp
&=[\sigma_j^{-1}\circ\chi_j\circ\Psi\circ\chi_{i+1}^{-1}\circ\sigma_{i+1}]^{ij}_k
\Llp\Lls\!\bigotimes_{1\leq n\leq N}\!\!t_n\Lrs^{j}_{ik}\Lrp\\ \nonumber
&=\Lls(\sigma_j^{-1}\circ\chi_j\circ\Psi)\Llp\!\!\bigotimes_{1\leq n\leq N\atop n\neq i+1}\!\!t_n\otimes
\sigma(t_{i+1})\Lrp\Lrs^i_{jk}\\ \nonumber
&=\Lls(\sigma_j^{-1}\circ\chi_j)\Llp\!\!\bigotimes_{1\leq n\leq N\atop n\neq i+1}\!\!t\swn{n}{0}\otimes
S\Llp\sigma(t_{i+1})\!\!\prod_{1\leq m\leq N\atop m\neq i+1}\!\!t\swn{m}{1}
\Lrp\Lrp\Lrs^i_{jk}\\
&=\Lls\!\!\bigotimes_{1\leq n\leq j\atop n\neq i+1}\!\!t\swn{n}{0}\otimes
(\sigma^{-1}\circ S)\Llp
\sigma(t_{i+1})\!\!\prod_{1\leq m\leq N\atop m\neq i+1}\!\!t\swn{m}{1}
\Lrp
\otimes\!\!\!\!\bigotimes_{j+1\leq s\leq N}\!\!\!\!t\swn{s}{0}\Lrs^i_{jk}.
\end{align}
Applying the above formula twice (with the  non-dummy indices changed),
we obtain:
\begin{align}
\nonumber
{}&(\phi^{ik}_j\circ\phi^{kj}_i)\Llp\Lls\!\bigotimes_{1\leq n\leq N}\!\!t_n\Lrs^{j}_{ik}\Lrp
=\phi^{ik}_j\Llp\Lls\!\!\bigotimes_{1\leq n\leq j\atop n\neq k+1}\!\!t\swn{n}{0}\otimes
(\sigma^{-1}\circ S)\Llp\sigma(t_{k+1})\!\!\prod_{1\leq m\leq N\atop m\neq k+1}\!\!t\swn{m}{1}
\Lrp\otimes\!\!\!\!\bigotimes_{j+1\leq s\leq N}\!\!\!\!t\swn{s}{0}\Lrs^k_{ji}\Lrp\\
\nonumber
{}&=\Lls\!\!\bigotimes_{1\leq n\leq k\atop n\neq i+1}\!\!t\swn{n}{0}\sw{0}\otimes
(\sigma^{-1}\circ S)\Llp
\sigma(t\swn{i+1}{0})
\Lblp(\sigma^{-1}\circ S)\llp\sigma(t_{k+1})\!\!\prod_{1\leq m\leq N\atop m\neq k+1}\!\!t\swn{m}{1}\lrp\Lbrp\sw{1}
\!\!\prod_{1\leq w\leq N\atop {w\neq i+1\atop w\neq k+1}}\!\!t\swn{w}{0}\sw{1}
\Lrp\\
{}&\phantom{=([}\otimes\!\!\!\!\bigotimes_{k+2\leq r\leq j}\!\!\!\!t\swn{n}{0}\sw{0}\otimes
\Lblp(\sigma^{-1}\circ S)\llp\sigma(t_{k+1})\!\!\prod_{1\leq m\leq N\atop m\neq k+1}\!\!t\swn{m}{1}
\lrp\Lbrp\sw{0}\otimes\!\!\!\!\bigotimes_{j+1\leq s\leq N}\!\!\!\!t\swn{s}{0}\sw{0}\Lrs^i_{jk}.
\end{align}
Now, as $\sigma^{-1}:C(S^1)\rightarrow
\Tplz/\Cpct$ is colinear, $S$ is an anti-coalgebra map, and $\Delta$
is an algebra homomorphism, we can move the Heynemann-Sweedler
 indices inside
the bold parentheses:
\begin{align}\nonumber
{}&\Lls\bigotimes_{1\leq n\leq k\atop n\neq i+1}t\swn{n}{0}\sw{0}\otimes
(\sigma^{-1}\circ S)\Llp\sigma(t\swn{i+1}{0})
S\Lblp\sigma(t_{k+1})\sw{1}\!\!\prod_{1\leq m\leq N\atop m\neq k+1}\!\!t\swn{m}{1}\sw{1}\Lbrp
\!\!\prod_{1\leq w\leq N\atop {w\neq i+1\atop w\neq k+1}}\!\!t\swn{w}{0}\sw{1}
\Lrp\\
{}&\phantom{=([}\otimes\!\!\!\!\bigotimes_{k+2\leq r\leq j}\!\!\!\!t\swn{n}{0}\sw{0}\otimes
(\sigma^{-1}\circ S)\Llp\sigma(t_{k+1})\sw{2}\!\!\prod_{1\leq m\leq N\atop m\neq k+1}\!\!t\swn{m}{1}\sw{2}\Lrp
\otimes
\!\!\!\!\bigotimes_{j+1\leq s\leq N}\!\!\!\!t\swn{s}{0}\sw{0}\Lrs^i_{jk}.
\end{align}
Here we can renumber the Heynemann-Sweedler indices using the
 coassociativity of $\Delta$.
We can also use the anti-multiplicativity of $S$ to move it inside the
bold parentheses in the first line of the
above calculation.  Finally, we use the
commutativity of $C(S^1)$ in order to reshuffle
the argument of $\sigma^{-1}\circ S$ in the first line to obtain:
\begin{align}\nonumber
{}&\Lls\bigotimes_{1\leq n\leq k\atop n\neq i+1}t\swn{n}{0}\otimes
(\sigma^{-1}\circ S)\Llp\sigma(t\swn{i+1}{0})
S(t\swn{i+1}{1})S\llp\sigma(t_{k+1})\sw{1}\lrp
\!\!\prod_{1\leq w\leq N\atop {w\neq i+1\atop w\neq k+1}}\!\!\lblp t\swn{w}{1}S(t\swn{w}{2})\lbrp
\Lrp\\
{}&\phantom{=([}\otimes\!\!\!\!\bigotimes_{k+2\leq r\leq j}\!\!\!\!t\swn{n}{0}\otimes
(\sigma^{-1}\circ S)\Llp\sigma(t_{k+1})\sw{2}t\swn{i+1}{2}
\!\!\prod_{1\leq m\leq N\atop {m\neq k+1 \atop m\neq i+1}}\!\!t\swn{m}{3}\Lrp
\otimes
\!\!\!\!\bigotimes_{j+1\leq s\leq N}\!\!\!\!t\swn{s}{0}	\Lrs^i_{jk}.
\end{align}
We can simplify the expression in the bold parentheses in the first line
using $h\sw{1}S(h\sw{2})=\varepsilon(h)$ and
$\varepsilon(h\sw{1})h\sw{2}=h$. This results in:
\begin{align}\nonumber
{}&\Lls\bigotimes_{1\leq n\leq k\atop n\neq i+1}t\swn{n}{0}\otimes
(\sigma^{-1}\circ S)\Llp\sigma(t\swn{i+1}{0})
S(t\swn{i+1}{1})S\llp\sigma(t_{k+1})\sw{1}\lrp
\Lrp\\
{}&\phantom{=([}\otimes\!\!\!\!\bigotimes_{k+2\leq r\leq j}\!\!\!\!t\swn{n}{0}\otimes
(\sigma^{-1}\circ S)\Llp\sigma(t_{k+1})\sw{2}t\swn{i+1}{2}
\!\!\prod_{1\leq m\leq N\atop {m\neq k+1 \atop m\neq i+1}}\!\!t\swn{m}{1}\Lrp
\otimes
\!\!\!\!\bigotimes_{j+1\leq s\leq N}\!\!\!\!t\swn{s}{0}	\Lrs^i_{jk}.
\end{align}
By the colinearity of $\sigma$, we can substitute in the above expression
\begin{align}\nonumber
\sigma(t\swn{i+1}{0})\otimes t\swn{i+1}{1}&\mapsto \sigma(t_{i+1})\sw{1}\otimes\sigma(t_{i+1})\sw{2},\\
\sigma(t_{k+1})\sw{1}\otimes\sigma(t_{k+1})\sw{2}&\mapsto \sigma(t\swn{k+1}{0})\otimes t\swn{k+1}{1},
\end{align}
to derive:
\begin{align}\nonumber
{}&\Lls\bigotimes_{1\leq n\leq k\atop n\neq i+1}t\swn{n}{0}\otimes
(\sigma^{-1}\circ S)\Llp\sigma(t_{i+1})\sw{1}
S\llp\sigma(t_{i+1})\sw{2}\lrp S\llp\sigma(t\swn{k+1}{0})\lrp
\Lrp\\ 
{}&\phantom{=([}\otimes\!\!\!\!\bigotimes_{k+2\leq r\leq j}\!\!\!\!t\swn{n}{0}\otimes
(\sigma^{-1}\circ S)\Llp t\swn{k+1}{1} \sigma(t_{i+1})\sw{3}
\!\!\prod_{1\leq m\leq N\atop {m\neq k+1 \atop m\neq i+1}}\!\!t\swn{m}{1}\Lrp
\otimes
\!\!\!\!\bigotimes_{j+1\leq s\leq N}\!\!\!\!t\swn{s}{0}	\Lrs^i_{jk}.
\end{align}
Applying again the antipode and counit properties yields the desired
\begin{align}\nonumber
{}&\Lls\bigotimes_{1\leq n\leq k\atop n\neq i+1}t\swn{n}{0}\otimes
(\sigma^{-1}\circ S)\Llp S\llp\sigma(t\swn{k+1}{0})\lrp
\Lrp\\ \nonumber
{}&\phantom{=([}\otimes\!\!\!\!\bigotimes_{k+2\leq r\leq j}\!\!\!\!t\swn{n}{0}\otimes
(\sigma^{-1}\circ S)\Llp t\swn{k+1}{1} \sigma(t_{i+1})
\!\!\prod_{1\leq m\leq N\atop {m\neq k+1 \atop m\neq i+1}}\!\!t\swn{m}{1}\Lrp
\otimes
\!\!\!\!\bigotimes_{j+1\leq s\leq N}\!\!\!\!t\swn{s}{0}	\Lrs^i_{jk}\\
\nonumber
{}=&\Lls\bigotimes_{1\leq n\leq k\atop n\neq i+1}t\swn{n}{0}\otimes
t\swn{k+1}{0}\otimes
\!\!\!\!\bigotimes_{k+2\leq r\leq j}\!\!\!\!t\swn{n}{0}\otimes
(\sigma^{-1}\circ S)\Llp t\swn{k+1}{1} \sigma(t_{i+1})
\!\!\prod_{1\leq m\leq N\atop {m\neq k+1 \atop m\neq i+1}}\!\!t\swn{m}{1}\Lrp
\otimes
\!\!\!\!\bigotimes_{j+1\leq s\leq N}\!\!\!\!t\swn{s}{0}	\Lrs^i_{jk}\\
\nonumber
{}=&\Lls\bigotimes_{1\leq n\leq j\atop n\neq i+1}t\swn{n}{0}\otimes
(\sigma^{-1}\circ S)\Llp \sigma(t_{i+1})
\!\!\prod_{1\leq m\leq N\atop {m\neq i+1}}\!\!t\swn{m}{1}\Lrp
\otimes
\!\!\!\!\bigotimes_{j+1\leq s\leq N}\!\!\!\!t\swn{s}{0}	\Lrs^i_{jk}\\
{}=&\phi^{ij}_k\Llp\Lls\bigotimes_{1\leq n\leq N}t_n\Lrs^{j}_{ik}\Lrp.
\end{align}
\end{proof}

Our next step is to prove that the assumptions of Lemma~\ref{buenos} hold,
so that we can take advantage of Birkhoff's Representation Theorem
to conclude the freeness of the lattice generated by the ideals $\ker\pi_i$.

\begin{lemma}\label{posetisomfree}
 For all
  non-empty subsets $I,J\subseteq\oN$
  \begin{equation*}
   \bigcap_{i\in I}\ker\pi_i\supseteq \bigcap_{j\in J}\ker\pi_j\quad
    \text{if and only if}\quad I\subseteq J.
  \end{equation*}
\end{lemma}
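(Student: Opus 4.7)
The plan is to handle the two directions separately. The implication $I \subseteq J \Rightarrow \bigcap_{i \in I} \ker \pi_i \supseteq \bigcap_{j \in J} \ker \pi_j$ is immediate, since intersecting over a larger index set produces a smaller ideal. The content lies in the converse, which I would prove by contraposition: given that $I \not\subseteq J$, I will exhibit an element $b \in \bigcap_{j \in J} \ker \pi_j$ that does not lie in $\ker \pi_{i_0}$ for some chosen $i_0 \in I \setminus J$.

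To construct such $b$, I would invoke Lemma~\ref{surjprop} on the partial family indexed by $J \cup \{i_0\}$, setting $b_j := 0$ for every $j \in J$ and choosing $b_{i_0} \in \Tpn{N}$ to be any non-zero element of $\bigcap_{j \in J}\ker\pi^{i_0}_j$. The pairwise compatibility conditions $\pi^{j}_{k}(b_j) = \pi^{k}_{j}(b_k)$ for $j,k \in J$ hold trivially because both sides vanish, and for $j \in J$ the condition $\pi^{i_0}_j(b_{i_0}) = \pi^{j}_{i_0}(0) = 0$ is exactly the membership requirement placed on $b_{i_0}$. The lemma then extends the partial family to a genuine $b \in C(\qcpN)$ with $\pi_j(b) = 0$ for $j \in J$ and $\pi_{i_0}(b) = b_{i_0} \neq 0$.

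What remains is to see that $\bigcap_{j \in J}\ker\pi^{i_0}_j$ contains a non-zero element. Reading off the explicit description of $\ker\pi^{i_0}_j$ given after Proposition~\ref{MattProp}, each such kernel has the form $\Tpn{a_j} \otimes \Cpct \otimes \Tpn{N-1-a_j}$ for some slot $a_j$ that depends on whether $i_0 < j$ or $i_0 > j$. Keeping track of the index shift, these slots $a_j$ are pairwise distinct as $j$ varies through $J$ (they correspond bijectively to the labels in $\oN \setminus \{i_0\}$). By the flatness of the C*-completed tensor product emphasized in the notation section, the intersection over $j \in J$ equals the closed ideal of $\Tpn{N}$ consisting of tensors with a $\Cpct$-factor in every designated slot $a_j$ and a $\Tplz$-factor elsewhere; this ideal is evidently non-zero (take, for instance, a pure tensor of rank-one projections in the $\Cpct$-slots and $1$'s elsewhere).

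The only genuine obstacle is the identification of the intersection of the kernels with the concrete tensor-product ideal described above; this is where flatness of the completed tensor product is essential. Once that is in hand, Lemma~\ref{surjprop} delivers the witnessing element $b$ and completes the proof.
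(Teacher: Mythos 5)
Your argument is correct, but it takes a genuinely different and noticeably heavier route than the paper's. Both proofs dispose of the ``if'' direction trivially and reduce the converse to exhibiting, for some $i_0\in I\setminus J$, an element of $\bigcap_{j\in J}\ker\pi_j$ not annihilated by $\pi_{i_0}$. You manufacture this witness by feeding a partial family ($0$ on the $J$-components, a nonzero $b_{i_0}\in\bigcap_{j\in J}\ker\pi^{i_0}_j$ on the $i_0$-component) into the extension Lemma~\ref{surjprop}, and you then need an auxiliary argument --- the pairwise-distinct $\Cpct$-slots together with flatness of the completed tensor product --- to see that this intersection of kernels is nonzero. The paper sidesteps both ingredients: it observes that any $0\neq x\in\Cpct^{\otimes N}$ lies in $\ker\sigma_k$ for every $k$ and is therefore annihilated by every gluing map $\pi^i_j$, so the tuple with entry $x$ on the components outside $J$ and $0$ on those inside $J$ satisfies all compatibility conditions in the trivial form $0=0$; it is thus already an element of $C(\qcpN)$ separating the two intersections, with no appeal to Lemma~\ref{surjprop} or to Proposition~\ref{MattProp}. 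The same observation would streamline your own proof: since $\Cpct^{\otimes N}\subseteq\ker\sigma_k$ for all $k$, any nonzero element of $\Cpct^{\otimes N}$ already lies in $\bigcap_{j\in J}\ker\pi^{i_0}_j$, so the flatness step you correctly flag as the only delicate point can be replaced by this cheap lower bound. What your approach buys is uniformity --- it is the same mechanism the paper itself uses later in Lemma~\ref{meetirredl} to produce the elements $p_m$ --- while the paper's construction is more elementary and self-contained for this particular lemma.
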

\begin{proof}
The ``if"-implication is obvious. For the ``only if"-implication,
 take $0\neq x\in\Cpct^{\otimes N}$ and, for any non-empty
  $I\subseteq\oN$, define
\begin{equation}
  x_I:=(x_i)_{i\in\oN}\in \bigcap_{i\in I}\ker\pi_i,\quad \text{where}\quad
  x_i:= \begin{cases}
    x & \text{if } i\notin I,\\
    0 & \text{if } i\in I.
  \end{cases}
\end{equation}
Let $I,J\subseteq\oN$ be non-empty,
and assume that $I\setminus J$ is
non-empty.  Then it follows that
\begin{equation}
x_J\in  \left(\bigcap_{j\in J}\ker\pi_j\right)
\setminus\left(\bigcap_{i\in I}\ker\pi_i\right)\neq\emptyset.
\end{equation}
This means
that $\bigcap_{j\in J}\ker\pi_j\not\subseteq\bigcap_{i\in I}\ker\pi_i$,
as desired. It follows that $\bigcap_{i\in I}\ker\pi_i$ are all distinct.
\end{proof}

\begin{lemma}\label{meetirredl}
  The ideals $\bigcap_{i\in I}\ker\pi_i$ are all meet (sum)
  irreducible for any $\emptyset\neq I\subsetneq\oN$.
\end{lemma}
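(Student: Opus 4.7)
The plan is to reduce the meet-irreducibility of $c:=\bigcap_{i\in I}\ker\pi_i$ to the strict containment
\[
c\;\not\subseteq\;\sum_{k\in\oN\setminus I}\bigcap_{i\in I\cup\{k\}}\ker\pi_i\,,
\]
and then exhibit an explicit witness in $c$ outside this sum. For the reduction, suppose $c=a+b$ with $a,b$ in the lattice, and use distributivity to write them as sums of intersections of generators: $a=\sum_\alpha\bigcap_{i\in I_\alpha}\ker\pi_i$, $b=\sum_\beta\bigcap_{i\in I_\beta}\ker\pi_i$ for non-empty $I_\alpha,I_\beta\subseteq\oN$. Since $a,b\subseteq c$, Lemma~\ref{posetisomfree} forces each $I_\alpha,I_\beta$ to contain $I$. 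If some $I_\alpha$ equals $I$, then $a\supseteq c$ and hence $a=c$; otherwise every $I_\alpha,I_\beta$ strictly contains $I$, so each summand lies in some $\bigcap_{i\in I\cup\{k\}}\ker\pi_i$ with $k\in\oN\setminus I$, giving $a+b\subseteq\sum_{k\notin I}\bigcap_{i\in I\cup\{k\}}\ker\pi_i$. The non-triviality clause of meet-irreducibility ($\exists\lambda\not\leq c$) is immediate from Lemma~\ref{posetisomfree}, since $c\not\subseteq\ker\pi_k$ for any $k\in\oN\setminus I$.

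For the construction of the witness, fix $k_0\in\oN\setminus I$. Recall that $\ker\pi^{k_0}_k=\Tpn{j(k)-1}\otimes\Cpct\otimes\Tpn{N-j(k)}$, where $j\colon\oN\setminus\{k_0\}\to\{1,\dots,N\}$ is the bijection $j(k)=k$ for $k>k_0$ and $j(k)=k+1$ for $k<k_0$. This bijection partitions the tensor positions as $\{1,\dots,N\}=J_I\sqcup J$ with $J_I:=j(I)$ and $J:=j(\oN\setminus(I\cup\{k_0\}))$. Pick a nonzero compact $e\in\Cpct$ and set $t:=\bigotimes_{j=1}^{N} t_j\in\Tpn{N}$ with $t_j:=e$ for $j\in J_I$ and $t_j:=1$ for $j\in J$. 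By construction $t\in\bigcap_{i\in I}\ker\pi^{k_0}_i$, so the prescription $\pi_i(z)=0$ for $i\in I$ and $\pi_{k_0}(z)=t$ satisfies the compatibility hypothesis of Lemma~\ref{surjprop}, which produces $z\in c$ with $\pi_{k_0}(z)=t$.

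Finally, suppose for contradiction that $z=\sum_{k\in\oN\setminus I}y^{(k)}$ with $y^{(k)}\in\bigcap_{i\in I\cup\{k\}}\ker\pi_i$. Applying $\pi_{k_0}$ and using $\pi_{k_0}(y^{(k_0)})=0$ together with $\pi^{k_0}_k(\pi_{k_0}(y^{(k)}))=\pi^k_{k_0}(\pi_k(y^{(k)}))=0$ for $k\neq k_0$, we obtain
\[
t=\pi_{k_0}(z)\;\in\;\sum_{k\in\oN\setminus(I\cup\{k_0\})}\ker\pi^{k_0}_k=\sum_{j\in J}\Tpn{j-1}\otimes\Cpct\otimes\Tpn{N-j}.
\]
Iterated application of the short exact sequence $0\to\Cpct\to\Tplz\xrightarrow{\sigma}\CS\to 0$, which is preserved by the completed tensor product thanks to the nuclearity/flatness noted in the paper's conventions, identifies the quotient of $\Tpn{N}$ by this sum with $\bigotimes_{j=1}^{N}A_j$, where $A_j=\CS$ for $j\in J$ and $A_j=\Tplz$ for $j\in J_I$; the quotient map is $\sigma$ on positions of $J$ and the identity elsewhere. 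The image of $t$ is then the elementary tensor with $\sigma(1)=1\in\CS$ at positions of $J$ and $e\in\Tplz$ at positions of $J_I$, which is nonzero, contradicting the above membership. The anticipated main obstacle is precisely this final quotient identification---that the sum of the ``compact-at-position'' ideals has the expected tensor-product-of-quotients as its quotient---where flatness of the completed tensor product is the essential input.
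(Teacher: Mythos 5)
Your proof is correct and takes essentially the same route as the paper's: the same reduction via Lemma~\ref{posetisomfree} to showing that $\bigcap_{i\in I}\ker\pi_i$ is not contained in $\sum_{J\supsetneq I}\bigcap_{j\in J}\ker\pi_j$, the same witness (compact at the positions indexed by $I$, non-compact elsewhere) lifted into $C(\qcpN)$ by Lemma~\ref{surjprop}, and the same separating map (your tensor product of $\sigma$'s and identities is exactly the paper's $\sigma^m_I$). The only cosmetic difference is that you phrase the final step as a quotient identification, whereas the paper only uses the weaker facts that this map annihilates each $\ker\pi^{k_0}_k$ for $k\notin I$ and is nonzero on the witness.
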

\begin{proof}
We proceed by contradiction. Suppose that $\bigcap_{i\in I}\ker\pi_i$
is not meet irreducible for some
$\emptyset\neq I\subsetneq\oN$.
By Lemma~\ref{posetisomfree},  $\bigcap_{i\in I}\ker\pi_i\neq \{0\}$
because $I\neq \oN$. Hence
there exist ideals
\begin{equation}
a_\mu=\sum_{J\in\mathcal{J}_\mu}\bigcap_{j\in J}\ker\pi_j,\quad
\mathcal{J}_\mu\subseteq \bs{2}^{\oN},
\quad \mu\in\{1,2\},
\end{equation}
such that
\begin{equation}
\bigcap_{i\in I}\ker\pi_i=a_1+a_2,\quad\text{and}\quad
 a_1,a_2\neq \bigcap_{i\in I}\ker\pi_i.
\end{equation}
In particular, $a_\mu\subseteq\bigcap_{i\in I}\ker\pi_i$,
$\mu\in\{1,2\}$.
On the other hand, if $I\in\mathcal{J}_\mu$, then
 $a_\mu\supseteq\bigcap_{i\in I}\ker\pi_i$. Hence
$a_\mu=\bigcap_{i\in I}\ker\pi_i$, contrary to our assumption. It follows
that, if $\bigcap_{i\in I}\ker\pi_i$ is not meet irreducible, then
\begin{equation}\label{assertl}
  \bigcap_{i\in I}\ker\pi_i=\sum_{J\in\mathcal{J}}\bigcap_{j\in J}\ker
\pi_j\,,\quad\text{for some}\quad
 \mathcal{J}\subseteq \bs{2}^{\oN}\setminus \{I\}.
\end{equation}

Suppose next that $I\setminus J_0$ is non-empty for some
$J_0\in\mathcal{J}$, and let $k\in I\setminus J_0$.
Then
\begin{equation}
  \{0\}=\pi_k\left(\bigcap_{i\in I}\ker\pi_i\right)
=\pi_k\left(\sum_{J\in\mathcal{J}}\bigcap_{j\in J}\ker\pi_j\right)
  \supseteq \pi_k\left(\bigcap_{j\in J_0}\ker\pi_j\right).
\end{equation}
However, from Lemma~\ref{posetisomfree} we see that $(\bigcap_{j\in
  J_0}\ker\pi_j)\setminus\ker\pi_k$ is non-empty.  Hence
$\pi_k(\bigcap_{j\in J_0}\ker\pi_j)$ is not $\{0\}$, and we have a
contradiction. It follows that for all $J_0\in\mathcal{J}$ the set
$I\setminus J_0$ is empty, i.e.,
$\forall\;J_0\in\mathcal{J}:\;I\subsetneq J_0$.

Finally, let $m\in\oN\setminus I$, and let
\begin{equation}
  T_m^I:=t_1\otimes\cdots\otimes t_N\,,
\quad\text{where}\quad 0\neq t_n\in
\left\{\begin{array}{ccc}
\Cpct & \text{if} &
m<n \in I \text{\ or\ } m> n-1 \in I, \\
\Tplz\setminus\Cpct & \text{if} &
m<n\notin I \text{\ or\ } m>n-1 \notin I.
\end{array}\right.
\end{equation}
Note that $\pi^m_k(T_m^I)=0$ if and only if $k\in I$. Hence,
by Lemma~\ref{surjprop},  there exists
$p_m\in\pi_m^{-1}(T_m^I)\cap\bigcap_{i\in I}\ker\pi_i$. Next, we
define
\begin{equation}
\sigma^m_I:=f_1\otimes\cdots\otimes f_N,\quad\text{where}\quad f_n:=
\left\{
\begin{array}{ccc}
\id_\Tplz &\text{if}&
m<n \in I \text{\ or\ } m> n-1 \in I, \\
\sigma & \text{if}&
m<n\notin I \text{\ or\ } m>n-1 \notin I,
\end{array}
\right.
\end{equation}
so that $\sigma^m_I(\pi_m(p_m))\neq 0$. On the other hand,
by our assumption~\eqref{assertl}, and the property that $J_0\supsetneq
I$ for all $J_0\in\mathcal{J}$, we have
\begin{equation}\label{contrad}
0\neq
p_m\in\bigcap_{i\in I}\ker\pi_i\subseteq \sum_{J\supsetneq I}\bigcap_{j
\in J}\ker\pi_j.
\end{equation}
Furthermore, for any $x\in C(\qcpN)$
\begin{equation}\label{quad}
\sigma^m_I(\pi_m(x))=0 \quad\text{if}\quad \pi^m_k(\pi_m(x))=0
\quad\text{for some}\quad k\notin I.
\end{equation}
Now, for any $J\supsetneq I$, we choose $k_J\in J\setminus I$, so that
\begin{equation}
\pi^m_{k_J}\Llp\pi_m\Llp\bigcap_{j\in J\supsetneq I}
\ker\pi_j\Lrp\Lrp
\subseteq\pi^{k_J}_m(\pi_{k_J}(\ker\pi_{k_J}))=\{0\}.
\end{equation}
Combining this with \eqref{quad}, we obtain
$\sigma^m_I(\pi_m(\bigcap_{j\in J\supsetneq I}\ker\pi_j))=\{0\}$
for all $J\supsetneq I$.
Consequently, $\sigma^m_I(\pi_m(\sum_{J\supsetneq I}\bigcap_{j\in
  J}\ker\pi_j))=\{0\}$, which contradicts \eqref{contrad}, and ends
the proof.
\end{proof}

  Summarizing,
  Lemma~\ref{posetisomfree} and Lemma~\ref{meetirredl}
combined with Lemma~\ref{buenos} yield the  main result of this paper:
\begin{thm}\label{mainfree}
  Let $C(\qcpN)\subset\prod_{i=0}^N\Tplz^{\otimes N}$ be the
C*-algebra of the Toeplitz quantum projective space, defined as the
 limit  of Diagram~\eqref{colimqcpn}, and let
  \begin{equation*}
    \pi_i\colon C(\qcpN)\longrightarrow\Tplz^{\otimes N},\quad
 i\in\oN,
\end{equation*}
be the family of restrictions of the canonical projections onto
the components.  Then the
family of ideals $\{\ker\pi_i\}_{i\in\oN}$ generates a free
distributive lattice.
\end{thm}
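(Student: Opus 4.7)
The plan is to package the preceding lemmas into a direct application of Lemma~\ref{buenos}. First, I will note that the lattice $\Lambda$ generated by $\{\ker\pi_i\}_{i\in\oN}$ under intersection and sum is automatically distributive, as already recorded in the paragraph following \eqref{canproj}, because the $\pi_i$ are C*-homomorphisms. Hence the only thing left to show is that $\Lambda$ enjoys the universal property, i.e., is free on $N+1$ generators.

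Next, I will fix the convention, inherited from Section~\ref{classsectcp}, in which the join of a family of ideals is their intersection and the meet is their sum; this is the convention under which the dictionary with the lattice of closed subsets of a classical covering exchanges $\cup$ and $\cap$. Under the assignment $\lambda_i \mapsto \ker\pi_i$, the free-lattice joins $\bigvee_{i\in I}\lambda_i$ then correspond to the ideals $\bigcap_{i\in I}\ker\pi_i$, and the lattice order becomes reverse inclusion. With this dictionary in place, Lemma~\ref{buenos} reduces the freeness of $\Lambda$ to verifying two assertions about the family $\{\bigcap_{i\in I}\ker\pi_i\}_{\emptyset\neq I \subsetneq \oN}$: each such ideal is meet irreducible, and the poset ordering on the family matches~\eqref{freeirrposet}. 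The first assertion is exactly the content of Lemma~\ref{meetirredl}, and the second is exactly the content of Lemma~\ref{posetisomfree}, once one remembers that reverse inclusion of intersections corresponds to inclusion of index sets.

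With both hypotheses of Lemma~\ref{buenos} verified, freeness follows immediately, and Birkhoff's Representation Theorem certifies that the resulting lattice is the one of non-empty upper sets of non-empty subsets of $\oN$. The substantive obstacles have already been absorbed into the earlier lemmas: Lemma~\ref{surjprop}, which uses the gluing data packaged by Proposition~\ref{MattProp} to produce elements of $C(\qcpN)$ with any prescribed partial component data, and the construction of the detecting elements $T_m^I$ together with the projections $\sigma^m_I$ inside the proof of Lemma~\ref{meetirredl}, which rule out nontrivial sum decompositions of $\bigcap_{i\in I}\ker\pi_i$. The only remaining care point in assembling the theorem is bookkeeping the opposite-convention orientation (join equals intersection) when cross-referencing \eqref{freeirrposet}, Lemma~\ref{posetisomfree}, and Lemma~\ref{meetirredl}, but this is purely cosmetic.
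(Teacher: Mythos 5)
Your proposal is correct and follows exactly the paper's own argument: the theorem is obtained by combining Lemma~\ref{posetisomfree} (the ordering condition~\eqref{freeirrposet}) and Lemma~\ref{meetirredl} (meet irreducibility of the ideals $\bigcap_{i\in I}\ker\pi_i$) with the criterion of Lemma~\ref{buenos}, distributivity being automatic for a lattice of C*-ideals. Your extra care with the convention that join is intersection and the order is reverse inclusion is consistent with the paper's usage and introduces no discrepancy.
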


\section{Other quantum projective spaces}

Let us first compare our construction of quantum
complex projective spaces with the construction  coming from
quantum groups. Then we complete this comparison by describing other
noncommutative versions of complex projective spaces that we found
in the literature.

\subsection{Noncommutative projective spaces as homogeneous spaces
  over quantum groups}

Complex projective spaces are fundamental examples of compact manifolds
without boundary. They can be viewed as the quotient spaces of
odd-dimensional spheres divided by an action of the group $U(1)$ of
unitary complex numbers. This presentation allows for a
noncommutative deformation coming from the world of
compact quantum groups via Soibelman-Vaksman spheres.
This approach has been widely explored,
and recently entered the very heart of noncommutative geometry via
the study of Dirac operators on the
thus obtained quantum projective
spaces~\cite{AndreaDabrowski:Projective}.

Recall that
the C*-algebra $C(\mathbb{C}P^{N}_q)$ of functions on a quantum
projective space,
as defined by  Soibelman and
Vaksman~\cite{VaksmanSoibelman:OddSpheres}, is the invariant subalgebra
for an action of $U(1)$ on the C*-algebra of the
odd-dimensional quantum sphere $C(S_q^{2N+1})$
(cf.~\cite{Meyer:QuantumGrassmanians}). By analyzing the space of
 characters, we want to show that this
C*-algebra is not isomorphic to the C*-algebra
$C(\B{P}^{N}(\mathcal{T}))$
of the Toeplitz
quantum projective space proposed in this paper,
unless $N=0$.
To this end, we observe first that
one can easily see from Definition~\ref{qcpn}
that the space of characters
on $C(\B{P}^{N}(\mathcal{T}))$
contains the $N$-torus.  On the other hand,
since $C(\mathbb{C}P_q^{N})$ is
a graph C*-algebra \cite{HongSzymanski:QuantumSpheres},
its space of characters is at most a circle. Hence these
C*-algebras can coincide only for $N=0,1$. For $N=0$, they both
degenerate to $\mathbb{C}$, and for $N=1$, they are known to be
the standard
Podle\'s and mirror quantum spheres, respectively. The latter
are non-isomorphic, so that the claim follows.

Better still,
 one can easily show that the C*-algebras
of the quantum-group projective spaces admit only one character.
Indeed,
 these C*-algebras
 are obtained by iterated extensions
by the ideal of compact operators, i.e., for any
$N$,
there is the  short exact sequence  of C*-algebras
\cite[eq.~4.11]{HongSzymanski:QuantumSpheres}:
\begin{equation}\label{exsentsz}
0\longrightarrow \Cpct\longrightarrow C(\mathbb{C}P_q^{N})
\longrightarrow C(\mathbb{C}P_q^{N-1})\longrightarrow 0.
\end{equation}
On the other hand, any character on a C*-algebra
containing the ideal $\Cpct$
of compact operators must evaluate to $0$ on $\Cpct$, as
 otherwise it would define a proper ideal
in $\Cpct$, which is impossible. Therefore, not only any character on
 $C(\mathbb{C}P^{N-1}_q)$ naturally extends to a character on
$C(\mathbb{C}P^{N}_q)$, but also any character on
$C(\mathbb{C}P^{N}_q)$ naturally descends to a character on
$C(\mathbb{C}P^{N-1}_q)$. Hence the space of characters on
$C(\mathbb{C}P_q^{N})$ coincides with
the space of characters on $C(\mathbb{C}P_q^{N-1})$.
Remembering that
$C(\mathbb{C}P_q^{0})=\mathbb{C}$, we conclude the claim.

\subsection{Noncommutative projective schemes}

Projective spaces \`a la
 Artin-Zhang~\cite{ArtinZhang:ProjectiveSchemes} and
 Rosenberg~\cite{Rosenberg:NCSchemes} are based on Gabriel's
 Reconstruction Theorem~\cite[Ch.~VI]{Gabriel:AbelianCategories}
 (cf.~\cite{Rosenberg:Reconstruction}) and Serre's
 Theorem~\cite[Prop.~7.8]{Serre:ProjectiveSchemes} (cf.~\cite[Vol.~II,
 3.3.5]{Grothendieck:EGA}). The former theorem describes how to
reconstruct   a scheme from its category of
 quasi-coherent sheaves. The latter establishes how to
obtain the category of quasi-coherent
 sheaves over the projective scheme corresponding to a conical affine
 scheme. First, one
 constructs a graded algebra $A$  of
 polynomials on this conical affine scheme and then,
 according to Serre's recipe, one divides the
 category of graded $A$-modules by the subcategory of graded modules
 that are torsion.   Such graded algebras corresponding to
projective manifolds have
 finite global dimension, admit a dualizing module, and their
 Hilbert series have polynomial growth. All this means that they are,
so called, {\em Artin--Schelter
   regular algebras}, or AS-regular algebras in short
 ~\cite{ArtinSchelter:AlgebrasOfGlobalDimension3}
 (cf.~\cite{ArtinTateVandenbergh:ASRegularRings}).
This property makes sense for  algebras which are not necessarily
commutative, so that we think about noncommutative algebras of this
sort as of generalized noncommutative projective manifolds.
 One important
 subclass of such well-behaving algebras are Sklyanin
 algebras~\cite{Sklyanin:YangBaxter}. Among other nice
 properties, they are quadratic Koszul, have finite Gelfand-Kirillov
 dimension~\cite{SheltonTingey:ASRegularAlgebrasAreKoszul}, and are
 Cohen-Macaulay~\cite{Levasseur:ASRegular}.  Another class of
 AS-regular algebras worth mentioning is the class of hyperbolic
rings~\cite{Rosenberg:NCSchemes}, which are also known as
   generalized Weyl algebras~\cite{Bavula:GeneralizedWeylAlgebras}, or
 as  generalized Laurent polynomial
   rings~\cite{CassidyGoetzShelton:LaurentOre}.

\subsection{Quantum deformations of Grassmanian and flag varieties}

In \cite{TaftTowber:QuantumGrassmanians}, Taft and Towber develop a
direct approach to quantizing the Grassmanians, or more generally,
flag varieties.  They define a particular deformation of  algebras
of functions on the classical Grassmanians and flag varieties using an
explicit (in terms of generators and relations) construction of affine
flag schemes defined by
Towber~\cite{Towber:Grassmanians1,Towber:Grassmanians2}.
Their deformation utilizes
$q$-determinants~\cite[Defn.~1.3]{TaftTowber:QuantumGrassmanians}
(cf.\ \cite[p.~227]{Kassel:QuantumGroups} and
~\cite[p.~312]{KlimykSchmudgen:QuantumGroups}) used to
construct a $q$-deformed version of the exterior
product~\cite[Sect.~2]{TaftTowber:QuantumGrassmanians}. This yields a
class of algebras known as quantum exterior
algebras~\cite{Bergh:QuantumExteriorAlgebras}.
These quantum exterior algebras are different
from Weyl algebras or Clifford algebras. They provide counterexamples
 for a number of homological conjectures for finite
dimensional algebras, even though they are  cohomologically well behaved.
See~\cite[Sect.~1]{Bergh:QuantumExteriorAlgebras} for more details.

{\bf Acknowledgements:} This work is part of the EU-project {\sl Geometry and
symmetry of quantum spaces} PIRSES-GA-2008-230836.
 It was also partially supported by the
Polish Government grants N201 1770 33 (PMH, BZ), 1261/7.PRUE/2009/7 (PMH),
189/6.PRUE/2007/7(PMH),
and the Argentinian grant PICT 2006-00836 (AK). Part of
this article was finished during a visit of AK at the Max Planck
Institute in Bonn.  The Institute support and hospitality are gratefully
acknowledged.  Finally, we are very happy to thank the following people for
discussions and advise:
Paul F.\ Baum, Nigel Higson, Masoud Khalkhali, Tomasz
Maszczyk, Ralf Meyer and Jan Rudnik.

\end{document}